\newtheorem{theorem}[equation]{Theorem}
\newtheorem{lemma}[equation]{Lemma}
\newtheorem{corollary}[equation]{Corollary}
\newtheorem{proposition}[equation]{Proposition}
\newtheorem{hypothesis}[equation]{Hypothesis}
\numberwithin{equation}{section}
\begin{document}

\title[A generalization of the Hasse-Witt matrix]{A generalization of the Hasse-Witt matrix \\ of a hypersurface}
\author{Alan Adolphson}
\address{Department of Mathematics\\
Oklahoma State University\\
Stillwater, Oklahoma 74078}
\email{adolphs@math.okstate.edu}
\author{Steven Sperber}
\address{School of Mathematics\\
University of Minnesota\\
Minneapolis, Minnesota 55455}
\email{sperber@math.umn.edu}
\date{\today}
\keywords{}
\subjclass{}
\begin{abstract}
The Hasse-Witt matrix of a hypersurface in ${\mathbb P}^n$ over a finite field of characteristic $p$ gives essentially complete mod $p$ information about the zeta function of the hypersurface.  But if the degree $d$ of the hypersurface is $\leq n$, the zeta function is trivial mod $p$ and the Hasse-Witt matrix is zero-by-zero.  We generalize a classical formula for the Hasse-Witt matrix to obtain a matrix that gives a nontrivial congruence for the zeta function for all $d$.  We also describe the differential equations satisfied by this matrix and prove that it is generically invertible. 
\end{abstract}
\maketitle

\section{Introduction}

Let $p$ be a prime number, let $q=p^a$, and let ${\mathbb F}_q$ be the field of $q$ elements.  Let
\begin{equation}
f_\lambda(x) = \sum_{j=1}^N \lambda_j x^{{\bf a}_j}\in {\mathbb F}_q[x_0,\dots,x_n]
\end{equation}
be a homogeneous polynomial of degree $d$.  We write ${\bf a}_j = (a_{0j},\dots,a_{nj})$ with $\sum_{i=0}^n a_{ij} = d$.  Let $X_\lambda\subseteq{\mathbb P}^n$ be the hypersurface defined by the equation $f_\lambda(x) = 0$ and let $Z(X_\lambda/{\mathbb F}_q,t)$ be its zeta function.  We write
\begin{equation}
Z(X_\lambda/{\mathbb F}_q,t) = \frac{P_\lambda(t)^{(-1)^n}}{(1-t)(1-qt)\cdots(1-q^{n-1}t)}
\end{equation}
for some rational function $P_\lambda(t)\in 1+t{\mathbb Z}[[t]]$.  If $d=1$ then $P_\lambda(t) = 1$, so we shall always assume that $d\geq 2$.

Define a nonnegative integer $\mu$ by the equation
\[ \bigg\lceil \frac{n+1}{d}\bigg\rceil = \mu+1, \]
where $\lceil r\rceil$ denotes the least integer greater than or equal to the real number $r$.  By a result of Ax\cite{A} (see also Katz\cite[Proposition~2.4]{K}) we have
\[ P_\lambda(q^{-\mu}t) \in 1+t{\mathbb Z}[[t]]. \]
Our goal in this paper is to give a mod $p$ congruence for $P_\lambda(q^{-\mu}t)$.  We do this by defining a generalization of the classical Hasse-Witt matrix, which gives such a congruence for $\mu=0$.  Presumably our matrix is the matrix of a ``higher Hasse-Witt'' operation as defined by Katz\cite[Section 2.3.4]{K2}, but so far we have not been able to prove this.

It will be convenient to define an augmentation of the vectors ${\bf a}_j$.  Set
\[ {\bf a}_j^+ = (a_{0j},\dots,a_{nj},1)\in{\mathbb N}^{n+2},\quad j=1,\dots,N, \]
where ${\mathbb N}$ denotes the nonnegative integers.  Note that the vectors ${\bf a}_j^+$ all lie on the hyperplane $\sum_{i=0}^n u_i=du_{n+1}$ in ${\mathbb R}^{n+2}$.  We shall be interested in the lattice points on this hyperplane that lie in $({\mathbb R}_{>0})^{n+2}$: set
\[ U =  \bigg\{ u=(u_0,\dots,u_{n+1})\in{\mathbb N}^{n+2}\mid \text{$\sum_{i=0}^n u_i = du_{n+1}$ and  $u_i>0$ for all $i$}\bigg\}. \]
Note that $u\in U$ implies that $u_{n+1}\geq\mu+1$.  Let
\[ U_{\min} = \{ u=(u_0,\dots,u_{n+1})\in U\mid u_{n+1} = \mu+1\}, \]
a nonempty set by the definition of $\mu$.  We define a matrix of polynomials with rows and columns indexed by $U_{\min}$: let $A(\Lambda) = [A_{uv}(\Lambda)]_{u,v\in U_{\min}}$, where
\[ A_{uv}(\Lambda) = (-1)^{\mu+1}\sum_{\substack{\nu\in{\mathbb N}^N\\ \sum_{j=1}^N \nu_j{\bf a}_j^+ = pu-v}} \frac{\Lambda_1^{\nu_1}\cdots\Lambda_N^{\nu_N}}{\nu_1!\cdots\nu_N!}\in {\mathbb Q}[\Lambda_1,\dots,\Lambda_N]. \]

Note that since the $(n+1)$-st coordinate of each ${\bf a}_j^+$ equals 1, the condition on the summation implies that
\[ \sum_{j=1}^N \nu_j = (p-1)(\mu+1). \]
When $\mu=0$, it follows that $\nu_j\leq p-1$ for all $j$, hence the matrix $A(\Lambda)$ can be reduced modulo $p$.  We denote by $\bar{A}(\Lambda)\in{\mathbb F}_p[\Lambda]$ its reduction modulo $p$.  Using the algorithm of Katz\cite[Algorithm 2.3.7.14]{K2}, one then checks that $\bar{A}(\lambda)$ is the Hasse-Witt matrix of the hypersurface $f_\lambda =0$.  It is somewhat surprising that even when $\mu>0$ we still have $\nu_j\leq p-1$ for all $j$.  

\begin{lemma}
If $u,v\in U_{\min}$, $\nu\in{\mathbb N}^N$, and $\sum_{j=1}^N \nu_j{\bf a}_j^+ = pu-v$, then $\nu_j\leq p-1$ for all $j$.  In particular, $A_{uv}(\Lambda)\in({\mathbb Q}\cap{\mathbb Z}_p)[\Lambda]$, so $A_{uv}(\Lambda)$ can be reduced modulo $p$.
\end{lemma}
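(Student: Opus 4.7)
The plan is to argue by contradiction: suppose some coordinate $\nu_{j_0}$ satisfies $\nu_{j_0}\geq p$, and show this forces $d\mu\geq n+1$, contradicting the inequality $d\mu<n+1$ that is built into the definition $\mu+1=\lceil(n+1)/d\rceil$.

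To extract the contradiction, I would read off the $i$-th coordinate of the equation $\sum_j\nu_j\mathbf{a}_j^+=pu-v$ for each $i=0,\dots,n$, obtaining
\[ \sum_{j=1}^N \nu_j a_{ij} = pu_i-v_i. \]
Keeping only the $j=j_0$ term on the left yields $pa_{ij_0}\leq \nu_{j_0}a_{ij_0}\leq pu_i-v_i$, hence $p(u_i-a_{ij_0})\geq v_i$. Since $v\in U$ forces $v_i\geq 1$, and $u_i-a_{ij_0}$ is an integer, it must be that $u_i-a_{ij_0}\geq 1$ (zero or negative values of this integer would make $p$ times it at most $0$, violating the bound). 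Summing over $i=0,\dots,n$, using $\sum_{i=0}^n a_{ij_0}=d$ (because $f_\lambda$ is homogeneous of degree $d$) and $\sum_{i=0}^n u_i=du_{n+1}=d(\mu+1)$ (because $u\in U_{\min}$), I obtain
\[ d(\mu+1)\geq d+(n+1), \qquad \text{i.e.,}\qquad d\mu\geq n+1, \]
which is the desired contradiction.

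The ``in particular'' clause is then immediate: once $\nu_j\leq p-1$ for every $j$, each $\nu_j!$ is a $p$-adic unit, so $1/(\nu_1!\cdots\nu_N!)\in\mathbb{Z}_p\cap\mathbb{Q}$, and $A_{uv}(\Lambda)$ is a $\mathbb{Z}_p\cap\mathbb{Q}$-linear combination of monomials (finite, since the constraint $\sum_j\nu_j\mathbf{a}_j^+=pu-v$ admits only finitely many $\nu$). I do not anticipate a serious obstacle: the argument is purely combinatorial, and the only delicate step is the integrality rounding $p(u_i-a_{ij_0})\geq 1 \Rightarrow u_i-a_{ij_0}\geq 1$, which uses essentially that $v\in U$ (so $v_i\geq 1$) rather than merely lying on the hyperplane $\sum_{i=0}^n u_i=du_{n+1}$.
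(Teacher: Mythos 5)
Your proof is correct. It is essentially the paper's argument in contrapositive form: where the paper first produces a single index $i_0$ with $a_{i_0 j_0}\geq u_{i_0}$ (by appealing to the minimality of $u_{n+1}=\mu+1$) and derives the contradiction at that one coordinate, you instead derive $a_{ij_0}\leq u_i-1$ at every coordinate $i=0,\dots,n$ and sum over $i$ to contradict $d\mu<n+1$ --- which is the same minimality of $u$ in $U$, just unpacked numerically rather than cited structurally.
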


The proof of Lemma 1.3 will be given in Section 2.  By the results of \cite[Theorem~2.7 or Theorem~3.1]{AS}, which will be recalled in Section 2, Lemma~1.3 implies immediately that each $A_{uv}(\Lambda)$ is a mod $p$ solution of an $A$-hypergeometric system of differential equations.  

Write the rational function $P_\lambda(t)$ of (1.2) as 
\[ P_\lambda(t) = \frac{Q_\lambda(t)}{R_\lambda(t)}, \]
where $Q_\lambda(t)$ and $R_\lambda(t)$ are relatively prime polynomials with 
\[ Q_\lambda(q^{-\mu}t),\;R_\lambda(q^{-\mu}t) \in 1+t{\mathbb Z}[t]. \]
If $X_\lambda$ is smooth, it is known that $P_\lambda(t)$ is a polynomial, i.~e., $R_\lambda(t)=1$.  
Our main result is the following, which does not require any smoothness assumption.

\begin{theorem}
If $n$ is not divisible by $d$, then $R_\lambda(q^{-\mu}t)\equiv 1\pmod{q}$ and 
\[ Q_\lambda(q^{-\mu}t) \equiv \det\big(I-t\bar{A}(\lambda^{p^{a-1}})\bar{A}(\lambda^{p^{a-2}})\cdots \bar{A}(\lambda)\big)\pmod{p}. \]
\end{theorem}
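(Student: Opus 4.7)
The plan is to realize $P_\lambda(q^{-\mu}t) \pmod p$ as the reverse characteristic polynomial of a reduction of a Dwork-type $q$-Frobenius. The augmentation ${\bf a}_j^+$ is the standard device of introducing an auxiliary variable $x_{n+1}$, so that projective point counts for $X_\lambda$ are expressed, via inclusion--exclusion on coordinate subtori, through toric exponential sums of the form
\[
S_k(\lambda) = \sum_{(x,x_{n+1}) \in (\mathbb{F}_{q^k}^*)^{n+2}} \Psi_k\bigl(x_{n+1} f_\lambda(x)\bigr).
\]
Dwork's analytic theory then expresses $Z(X_\lambda/\mathbb{F}_q,t)$ as an alternating product of Fredholm determinants $\det(I - t\alpha_a)^{\pm 1}$, where $\alpha_a$ is a $q$-Frobenius on a $p$-adic Banach space $\mathcal{B}$ with orthonormal basis $\{x^u\}_{u \in U}$; the factor $P_\lambda(t)$ corresponds, up to the trivial factors of (1.2), to the middle-dimensional piece in this product.

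Next I would compute the matrix of the $p$-Frobenius $\alpha_1$ in the basis $\{x^u\}$. The Dwork splitting function $\exp\bigl(\sum_j \Lambda_j x^{{\bf a}_j^+}\bigr)$ produces the $(u,v)$-entry
\[
(\alpha_1)_{uv} = \pm \sum_{\substack{\nu \in \mathbb{N}^N \\ \sum_j \nu_j {\bf a}_j^+ = pu - v}} \frac{\Lambda_1^{\nu_1} \cdots \Lambda_N^{\nu_N}}{\nu_1! \cdots \nu_N!},
\]
so the $U_{\min} \times U_{\min}$ block of $\alpha_1$ is precisely $(-1)^{\mu+1}A(\Lambda)$. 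Filtering $\mathcal{B}$ by the value of $u_{n+1}$ and using the Ax divisibility $q^\mu \mid \alpha_a$ to extract a global factor of $p^\mu$, one checks that $p^{-\mu}\alpha_1$ is block triangular modulo $p$ with the $U_{\min}$-block reducing to $\bar{A}(\Lambda)$ and all other relevant diagonal blocks vanishing. Since the $q$-Frobenius decomposes as $\alpha_a = \alpha_1^{\sigma^{a-1}} \cdots \alpha_1^\sigma \alpha_1$, where $\sigma$ lifts $p$-th power Frobenius and acts on $\lambda$ by $\lambda \mapsto \lambda^p$, the mod $p$ reduction of $q^{-\mu}\alpha_a$ is precisely the product $\bar{A}(\lambda^{p^{a-1}}) \cdots \bar{A}(\lambda)$ acting on the $U_{\min}$-block.

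The main obstacle is to show $R_\lambda(q^{-\mu}t) \equiv 1 \pmod q$, i.e., that the denominator of $P_\lambda$ and the trivial zeta factors contribute only units modulo $q$ after the Ax normalization. This is precisely where the hypothesis $d \nmid n$ must be used. The spurious eigenvalues producing $R_\lambda$ arise from degenerate strata of $X_\lambda$ (typically from intersections with coordinate linear subspaces) together with the lower-dimensional pieces of the Dwork complex. A slope comparison should show that their $p$-adic valuations lie strictly below $\mu$ whenever $d \nmid n$, so that they collapse to units modulo $q$ after dividing by $q^\mu$. When $d \mid n$, by contrast, one expects an extra eigenvalue sitting exactly at slope $\mu$ coming from a middle-dimensional boundary stratum of the Newton polytope of $f_\lambda^+$, which would spoil the clean determinantal congruence. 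Making this separation of slopes precise, presumably by comparing the $u_{n+1}$-filtration on $\mathcal{B}$ with the slope filtration coming from the Ax--Katz divisibility, is the technical heart of the argument; once it is in hand, the determinantal formula for $Q_\lambda$ follows from the standard Dwork trace formula applied to the $U_{\min}$-part of $p^{-\mu}\alpha_a$.
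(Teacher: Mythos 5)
Your overall strategy is the same as the paper's: Dwork's trace formula with the augmented exponents ${\bf a}_j^+$, inclusion--exclusion over coordinate subsets, the observation that the $U_{\min}\times U_{\min}$ block of the $p$-Frobenius is $(-1)^{\mu+1}A(\Lambda)$, and the factorization of the $q$-Frobenius into $a$ twisted copies of the $p$-Frobenius. But the proposal leaves unproved exactly the two points on which the theorem rests. First, the assertion that $q^{-\mu-1}\alpha$ is ``block triangular modulo $p$'' with the $U_{\min}$-block reducing to $\bar A(\lambda^{p^{a-1}})\cdots\bar A(\lambda)$ is not automatic from $\alpha_a=\alpha_1^{\sigma^{a-1}}\cdots\alpha_1$: one must show that in the expansion $\theta_{0,qu-v}=\sum\prod_i\theta_{w^{(i)}}(\hat\lambda^{p^i})$ over all decompositions $\sum p^i w^{(i)}=qu-v$, every term whose intermediate exponents leave $U_{\min}$ is divisible by $pq^{\mu+1}$. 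The paper does this by constructing from each decomposition an auxiliary sequence $\tilde w^{(i)}\in U$ with $w^{(i)}=p\tilde w^{(i+1)}-\tilde w^{(i)}$ and converting the valuation bound ${\rm ord}_p\,\theta_w\ge w_{n+1}/(p-1)$ into the estimates of Lemmas 3.23 and 5.7; without some such argument the congruence between the Fredholm determinant and $\det(I-t\bar A(\lambda^{p^{a-1}})\cdots\bar A(\lambda))$ is unjustified.

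Second, you correctly identify $R_\lambda(q^{-\mu}t)\equiv 1\pmod q$ as the place where $d\nmid n$ enters, but you leave it as a ``slope comparison [that] should show'' the result, and you state the required inequality in the wrong direction: for a factor $1-q^{-\mu}\alpha t$ to be $\equiv 1\pmod q$ you need ${\rm ord}_q\,\alpha\ge\mu+1$, i.e.\ valuation strictly \emph{above} $\mu$, not below. In the paper this step is not a delicate stratification argument but a short computation (Propositions 4.3--4.6): once one knows that the reciprocal roots on $L_0^I$ are divisible by $q^{\mu_I+1}$, the factor indexed by $I$ in (3.5) contributes roots divisible by $q$ to the power $n+1-|I|+\lceil|I|/d\rceil$, and one checks this exceeds $\mu+1$ for every $I\neq S$ exactly when $d\nmid n$. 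Your sketch would be completed by supplying the divisibility estimate for each $L_0^I$ (which again requires the $\tilde w^{(i)}$ construction, applied with general $I$) and then this elementary ceiling-function verification.
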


Note that even in the classical case of the Hasse-Witt matrix ($\mu=0$), this result contains something new, as we do not assume that $X_\lambda$ is a smooth hypersurface.

The proof of Theorem~1.4 will occupy Sections~\mbox{3--5}.  To describe the zeta function, we apply the $p$-adic cohomology theory of Dwork, as in Katz\cite[Sections 4--6]{K3}.  Indeed, Equation~(3.5) below is a refined version of \cite[Equation~(4.5.33)]{K3}.  We discuss the case $d\,|\,n$ in Section~6.  If $d\,|\, n$, the conclusion of Theorem 1.4 need not hold, and the rational function $P_\lambda(q^{-\mu}t)\pmod{p}$ is instead described by Theorem 6.2.  We prove the generic invertibility of the matrix $\bar{A}(\Lambda)$ in Section 7.

\section{Proof of Lemma 1.3}

It will be convenient for later applications to prove a more general version of Lemma 1.3.  Put $S=\{0,1,\dots,n\}$ and let $I\subseteq S$.  Define an integer $\mu_I$ by the equation
\[ \bigg\lceil\frac{|I|}{d}\bigg\rceil = \mu_I + 1. \]
Note that $\mu_I\geq 0$ if $I\neq\emptyset$, $\mu_\emptyset = -1$, and, in the notation of the Introduction, $\mu_S = \mu$.  Set
\[ U^I =  \bigg\{ u=(u_0,\dots,u_{n+1})\in{\mathbb N}^{n+2}\mid \text{$\sum_{i=0}^n u_i = du_{n+1}$ and  $u_i>0$ for all $i\in I$}\bigg\}. \]
Note that $u\in U^I$ implies that $u_{n+1}\geq\mu_I+1$.  Let
\[ U^I_{\min} = \{ u=(u_0,\dots,u_{n+1})\in U^I\mid u_{n+1} = \mu_I+1\}, \]
a nonempty set by the definition of $\mu_I$.  Lemma 1.3 is the special case $I=S$ of the following result.

\begin{lemma}
If $u,v\in U^I_{\min}$, $\nu\in{\mathbb N}^N$, and $\sum_{j=1}^N \nu_j{\bf a}_j^+ = pu-v$, then $\nu_j\leq p-1$ for all $j$. 
\end{lemma}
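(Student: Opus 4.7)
The plan is to argue by contradiction: suppose some $\nu_{j_0}\geq p$, and show this forces $d\mu_I\geq |I|$, contradicting the defining inequality $d\mu_I<|I|$ that comes from $\lceil |I|/d\rceil=\mu_I+1$.

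First I would exploit the hypothesis $\nu_{j_0}\geq p$ to subtract $p\mathbf{a}_{j_0}^+$ from the identity $\sum_j\nu_j\mathbf{a}_j^+=pu-v$. This rewrites
\[ p(u-\mathbf{a}_{j_0}^+)-v=(\nu_{j_0}-p)\mathbf{a}_{j_0}^++\sum_{j\neq j_0}\nu_j\mathbf{a}_j^+, \]
which is a nonnegative integer combination of the $\mathbf{a}_j^+$, hence has nonnegative entries in every coordinate. Coordinate by coordinate this gives $p(u_i-a_{i,j_0})\geq v_i$ for each $i\in\{0,\dots,n\}$.

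Next I would extract the two regimes. For $i\in I$ we have $v_i\geq 1$ (by definition of $U^I$), so the integer $u_i-a_{i,j_0}$ is at least $1/p$, hence $\geq 1$; for $i\notin I$ we only get $u_i-a_{i,j_0}\geq 0$. Summing over $i=0,\dots,n$, using $\sum_i a_{i,j_0}=d$ and $\sum_i u_i=du_{n+1}=d(\mu_I+1)$, yields
\[ d\mu_I=\sum_{i=0}^n(u_i-a_{i,j_0})\geq |I|. \]
On the other hand, $\lceil |I|/d\rceil=\mu_I+1$ forces $d\mu_I<|I|$, and we have our contradiction.

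I do not expect any real obstacle here: the argument is essentially a bookkeeping of the three constraints (the hyperplane condition $\sum_i u_i=du_{n+1}$, the positivity $u_i>0$ for $i\in I$, and the ceiling definition of $\mu_I$) against a single ``excess'' inequality coming from $\nu_{j_0}\geq p$. The only thing to be careful about is that the condition $v\in U^I_{\min}$ is used only through $v_i\geq 1$ for $i\in I$ (and the nonnegativity $v_i\geq 0$ for $i\notin I$), while the condition $u\in U^I_{\min}$ is used only through the value $u_{n+1}=\mu_I+1$ and the hyperplane relation; the bound $u_i>0$ for $i\in I$ is not needed in the argument.
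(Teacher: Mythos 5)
Your proof is correct and is essentially the paper's argument run in the contrapositive direction: both show that $\nu_{j_0}\geq p$ would force $u-{\bf a}_{j_0}^+$ to satisfy the defining inequalities of $U^I$ while having last coordinate $\mu_I$, which is impossible by the definition of $\mu_I$ (your summation giving $d\mu_I\geq |I|$ against $d\mu_I<|I|$ just makes that impossibility explicit). The paper instead first isolates a single coordinate $i_0$ where $a_{i_0 j_0}$ is large relative to $u_{i_0}$ and derives the contradiction at that one coordinate, but the ingredients --- $v_i\geq 1$ for $i\in I$, $v_i\geq 0$ otherwise, the hyperplane relation, and the minimality encoded in $\mu_I$ --- are used identically.
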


\begin{proof}
The result is trivial when $I=\emptyset$ since $U^\emptyset_{\min} = \{(0,\dots,0)\}$, so assume $I\neq\emptyset$.  
Let  $u=(u_0,\dots,u_n,\mu_I+1),v=(v_0,\dots,v_n,\mu_I+1)\in U^I_{\min}$.  Fix $k\in\{1,\dots,N\}$.  We claim there exists an index $i_0\in\{0,\dots,n\}$ such that
\begin{equation}
a_{i_0k}\geq \begin{cases} u_{i_0} & \text{if $i_0\in I$,} \\ u_{i_0}+1 & \text{if $i_0\not\in I$.} \end{cases}
\end{equation}
For if (2.2) fails for all $i_0\in\{0,\dots,n\}$, then
\[ u-{\bf a}_k^+ = (u_0-a_{0k},\dots,u_n-a_{nk},\mu_I)\in U^I, \]
contradicting the definition of $\mu_I$.

If $\nu_k\geq p$, then
\[ \nu_ka_{i_0k}\geq pa_{i_0k}\geq \begin{cases} pu_{i_0} & \text{if $i_0\in I$,} \\ pu_{i_0}+p & \text{if $i_0\not\in I$,} \end{cases} \]
hence in both cases we have
\[ \nu_ka_{i_0k}>pu_{i_0}-v_{i_0}. \]
But our hypothesis $\sum_{j=1}^N \nu_j{\bf a}_j^+ = pu-v$ implies that
\[ \nu_ka_{i_0k}\leq pu_{i_0}-v_{i_0}. \]
This contradiction shows that $\nu_k\leq p-1$.  And since $k$ was arbitrary, the lemma is established.
\end{proof}

We recall the definition of the $A$-hypergeometric system of differential equations associated to the set $A = \{{\bf a}_j^+\}_{j=1}^N$.  Let $L\subseteq {\mathbb Z}^N$ be the lattice of relations on $A$,
\[ L = \bigg\{ l=(l_1,\dots,l_N)\in{\mathbb Z}^N\mid \sum_{j=1}^N l_j{\bf a}_j^+ = {\bf 0}\bigg\}, \]
and let $\beta=(\beta_0,\dots,\beta_{n+1})\in {\mathbb C}^{n+2}$.  The $A$-hypergeometric system with parameter~$\beta$ is the system of partial differential operators in variables $\Lambda_1,\dots,\Lambda_N$ consisting of the box operators
\[ \Box_l = \prod_{l_j>0} \bigg(\frac{\partial}{\partial\Lambda_j}\bigg)^{l_j} - \prod_{l_j<0} \bigg(\frac{\partial}{\partial\Lambda_j}\bigg)^{-l_j} \quad\text{for $l\in L$} \]
and the Euler (or homogeneity) operators
\[ Z_i = \sum_{j=1}^N a_{ij}\Lambda_j\frac{\partial}{\partial\Lambda_j} - \beta_i\quad\text{for $i=0,\dots,n$} \]
and
\[ Z_{n+1} = \sum_{j=1}^N \Lambda_j\frac{\partial}{\partial\Lambda_j} - \beta_{n+1}. \]

Let $A^I(\Lambda) = [A^I_{uv}(\Lambda)]_{u,v\in U^I_{\min}}$, where
\begin{equation}
A^I_{uv}(\Lambda) = (-1)^{\mu_I+1}\sum_{\substack{\nu\in{\mathbb N}^N\\ \sum_{j=1}^N \nu_j{\bf a}_j^+ = pu-v}} \frac{\Lambda_1^{\nu_1}\cdots\Lambda_N^{\nu_N}}{\nu_1!\cdots\nu_N!}\in {\mathbb Q}[\Lambda_1,\dots,\Lambda_N]. 
\end{equation}
Note that in the notation of the Introduction we have $A^S_{uv}(\Lambda) = A_{uv}(\Lambda)$.  
By Lemma 2.1, the polynomials $A^I_{uv}(\Lambda)$ have $p$-integral coefficients.  
Lemma 2.1 also says that $pu-v$ is very good in the sense of \cite[Section~2]{AS}.  We may therefore apply  \cite[Theorem~2.7]{AS} (or \cite[Theorem 3.1]{AS} since this system is nonconfluent) to conclude that $\bar{A}^I_{uv}(\Lambda)$ is a mod $p$ solution of the $A$-hypergeometric system with parameter $\beta = pu-v$ (or, equivalently, $\beta=-v$ since we have reduced modulo $p$).  

\section{The zeta function}

To make a connection between the matrix $A(\Lambda)$ and the zeta function~(1.2), we apply a consequence of the Dwork trace formula developed in \cite{AS2} (see Equation~(3.5) below).  Let $\gamma_0$ be a zero of the series $\sum_{i=0}^\infty t^{p^i}/p^i$ having ${\rm ord}_p\:\gamma_0 = 1/(p-1)$, where ${\rm ord}_p$ is the $p$-adic valuation normalized by ${\rm ord}_p\:p = 1$.  Let $L_0$ be the space of series
\[ L_0 =  \bigg\{\sum_{u \in{\mathbb N}^{n+2}} c_u\gamma_0^{pu_{n+1}}x^u\mid\text{$\sum_{i=0}^n u_i-du_{n+1}=0$, $c_u\in{\mathbb C}_p$, and $\{c_u\}$ is bounded}\bigg\}. \]  
For $I\subseteq\{0,\dots,n\}$, let $L_0^I$ be the subset of $L_0$ defined by
\[ L_0^I = \bigg\{ \sum_{u \in{\mathbb N}^{n+2}} c_u\gamma_0^{pu_{n+1}}x^u\in L_0\mid \text{$u_i>0$ for $i\in I$}\bigg\}. \]
Let $ {\rm AH}(t)= \exp(\sum_{i=0}^{\infty}t^{p^i}/p^i)$ be the Artin-Hasse series, a power series in $t$ that has $p$-integral coefficients, and set 
\[ \theta(t) = {\rm AH}({\gamma}_0t)=\sum_{i=0}^{\infty}\theta_i t^i. \]
We then have
\begin{equation}
{\rm ord}_p\: \theta_i\geq \frac{i}{p-1}.
\end{equation}

We define the Frobenius operator on $L_0$.  Put
\begin{equation}
\theta(\hat{\lambda},x) = \prod_{j=1}^N \theta(\hat{\lambda}_jx^{{\bf a}^+_j}),
\end{equation}
where $\hat{\lambda}$ denotes the Teichm\"uller lifting of $\lambda$.  
We shall also need to consider the series $\theta_0(\hat{\lambda},x)$ defined by
\begin{equation}
\theta_0(\hat{\lambda},x) = \prod_{i=0}^{a-1} \prod_{j=1}^N \theta\big((\hat{\lambda}_jx^{{\bf a}^+_j})^{p^i}\big) = \prod_{i=0}^{a-1}\theta(\hat{\lambda}^{p^i},x^{p^i}).
\end{equation}
Define an operator $\psi$ on formal power series by
\begin{equation}
\psi\bigg(\sum_{u\in{\mathbb N}^{n+2}} c_ux^u\bigg) = \sum_{u\in{\mathbb N}^{n+2}} c_{pu}x^u.
\end{equation}
Denote by $\alpha_{\hat{\lambda}}$ the composition
\[ \alpha_{{\hat{\lambda}}} := \psi^a\circ\text{``multiplication by $\theta_0(\hat{\lambda},x)$.''} \]
The map $\alpha_{\hat{\lambda}}$ operates on $L_0$ and is stable on each $L_0^I$.  The proof of Theorem 1.4 will be based on the following formula for the rational function $P_\lambda(t)$ defined in~(1.2).  By \cite[Equation 7.12]{AS2} we have
\begin{equation}
P_\lambda(qt) = \prod_{I\subseteq\{0,1,\dots,n\}} 
\det(I-q^{n+1-|I|}t\alpha_{\hat{\lambda}}\mid L_0^I)^{(-1)^{n+1+|I|}}.
\end{equation}

To exploit (3.5) we shall need $p$-adic estimates for the action of $\alpha_{\hat{\lambda}}$ on $L^I_0$.  
Expand (3.3) as a series in $x$, say,
\begin{equation}
\theta_0(\hat{\lambda},x) = \sum_{w\in{\mathbb N}A} \theta_{0,w}(\hat{\lambda})x^w.
\end{equation}
Note that from the definitions we have $\theta_{0,w}(\hat{\lambda})\in{\mathbb Q}_p(\zeta_{q-1},\gamma_0)$.  
A direct calculation shows that for $v\in U^I$,
\begin{equation}
\alpha_{\hat{\lambda}}(x^v) =\sum_{u\in U^I} \theta_{0,qu-v}(\hat{\lambda})x^u,
\end{equation}
thus we need $p$-adic estimates for the $\theta_{0,qu-v}(\hat{\lambda})$ with $u,v\in U^I$.

Expand (3.2) as a series in $x$:
\begin{equation}
\theta(\hat{\lambda},x) = \sum_{w\in{\mathbb N}A} \theta_w(\hat{\lambda})x^w,
\end{equation}
where 
\begin{equation}
\theta_w(\hat{\lambda}) = \sum_{\nu\in{\mathbb N}^N} \theta_\nu^{(w)}\hat{\lambda}^\nu
\end{equation}
with
\begin{equation}
\theta_\nu^{(w)} = \begin{cases} \prod_{j=1}^N \theta_{\nu_j} & \text{if $\sum_{j=1}^N \nu_j{\bf a}_j^+ = w$,} \\ 0 & \text{if $\sum_{j=1}^N \nu_j{\bf a}_j^+ \neq w$.} \end{cases}
\end{equation}
From (3.1) we have the estimate 
\begin{equation}
{\rm ord}_p\:\theta_\nu^{(w)}\geq\frac{\sum_{j=1}^N\nu_j}{p-1} = \frac{w_{n+1}}{p-1}.
\end{equation}
In particular, this implies the estimate
\begin{equation}
{\rm ord}_p\:\theta_w(\hat{\lambda}) \geq \frac{w_{n+1}}{p-1}.
\end{equation}

By (3.3) and (3.8) we have
\begin{equation}
\theta_{0,w}(\hat{\lambda}) = \sum_{\substack{u^{(0)},\dots,u^{(a-1)}\in{\mathbb N}A\\ \sum_{i=0}^{a-1}
p^iu^{(i)}=w}} \prod_{i=0}^{a-1} \theta_{u^{(i)}}(\hat{\lambda}^{p^i}).
\end{equation}
In particular, we get the formula
\begin{equation}
\theta_{0,qu-v}(\hat{\lambda}) = \sum_{\substack{w^{(0)},\dots,w^{(a-1)}\in{\mathbb N}A\\ \sum_{i=0}^{a-1}
p^iw^{(i)}=qu-v}} \prod_{i=0}^{a-1} \theta_{w^{(i)}}(\hat{\lambda}^{p^i}). 
\end{equation}
Applying (3.12) to the products on the right-hand side of (3.14) gives
\begin{equation}
{\rm ord}_p\:\bigg(\prod_{i=1}^{a-1} \theta_{w^{(i)}}(\hat{\lambda}^{p^i})\bigg)\geq \sum_{i=0}^{a-1} \frac{w^{(i)}_{n+1}}{p-1}.
\end{equation}
This estimate is not directly helpful for estimating $\theta_{0,qu-v}(\hat{\lambda})$ because we lack information about the $w^{(i)}$.  Instead we proceed as follows.  

Fix $w^{(0)},\dots,w^{(a-1)}\in{\mathbb N}A$ with 
\begin{equation}
\sum_{i=0}^{a-1} p^iw^{(i)} = qu-v.
\end{equation}  
We construct inductively from $\{w^{(i)}\}_{i=0}^{a-1}$ a related sequence $\{\tilde{w}^{(i)}\}_{i=0}^a\subseteq U^I$ such that 
\begin{equation}
w^{(i)} = p\tilde{w}^{(i+1)}-\tilde{w}^{(i)}\quad\text{for $i=0,\dots,a-1$.}
\end{equation}
First of all, take $\tilde{w}^{(0)} = v$.  Eq.~(3.16) shows that $w^{(0)}+\tilde{w}^{(0)}=p\tilde{w}^{(1)}$ for some $\tilde{w}^{(1)}\in{\mathbb Z}^{n+2}$; since $w^{(0)}\in{\mathbb N}A$ and $\tilde{w}^{(0)}\in U^I$ we conclude that $\tilde{w}^{(1)}\in U^I$.  Suppose that for some $0< k\leq a-1$ we have defined $\tilde{w}^{(0)},\dots,\tilde{w}^{(k)}\in U^I$ satisfying (3.17) for $i=0,\dots,k-1$.  Substituting $p\tilde{w}^{(i+1)}-\tilde{w}^{(i)}$ for $w^{(i)}$ for $i=0,\dots,k-1$ in (3.16) gives
\begin{equation}
-\tilde{w}^{(0)}+ p^k\tilde{w}^{(k)} + \sum_{i=k}^{a-1} p^iw^{(i)} = p^au-v.
\end{equation}
Since $\tilde{w}^{(0)}=v$, we can divide this equation by $p^k$ to get $\tilde{w}^{(k)} + w^{(k)} = p\tilde{w}^{(k+1)}$ for some $\tilde{w}^{(k+1)}\in{\mathbb Z}^{n+2}$.  Since $w^{(k)}\in{\mathbb N}A$ and (by induction) $\tilde{w}^{(k)}\in U^I$, we conclude that $\tilde{w}^{(k+1)}\in U^I$.  This completes the inductive construction.  Note that in the special case $k=a-1$, this computation gives $\tilde{w}^{(a)} = u$.

Summing Eq.~(3.17) over $i=0,\dots,a-1$ and using $\tilde{w}^{(0)} = v$,  $\tilde{w}^{(a)} = u$, gives
\begin{equation}
\sum_{i=0}^{a-1} w^{(i)} = pu-v + (p-1)\sum_{i=1}^{a-1} \tilde{w}^{(i)},
\end{equation}
hence
\begin{equation}
\sum_{i=0}^{a-1} \frac{w^{(i)}_{n+1}}{p-1} = \frac{pu_{n+1}-v_{n+1}}{p-1} + \sum_{i=1}^{a-1} \tilde{w}^{(i)}_{n+1}.
\end{equation}
For $w^{(0)},\dots,w^{(a-1)}$ as in (3.16), we thus get from (3.15)
\begin{equation}
{\rm ord}_p\:\bigg(\prod_{i=0}^{a-1} \theta_{w^{(i)}}(\hat{\lambda}^{p^i})\bigg)\geq \frac{pu_{n+1}-v_{n+1}}{p-1} + \sum_{i=1}^{a-1} \tilde{w}^{(i)}_{n+1}.
\end{equation}
Since $\tilde{w}^{(i)}\in U^I$, we have
\begin{equation}
\text{$\tilde{w}^{(i)}_{n+1} =\mu_I+1$ if $\tilde{w}^{(i)}\in U^I_{\min}$ and $\tilde{w}^{(i)}_{n+1}  \geq\mu_I+2$ if $\tilde{w}^{(i)}\not\in U^I_{\min}$.} 
\end{equation}
From (3.21) and (3.22) we get the following result.
\begin{lemma}
For $u,v\in U^I$ and $w^{(0)},\dots,w^{(a-1)}$ as in $(3.16)$, we have
\begin{equation}
{\rm ord}_p\:\bigg(\prod_{i=0}^{a-1} \theta_{w^{(i)}}(\hat{\lambda}^{p^i})\bigg)\geq \frac{pu_{n+1}-v_{n+1}}{p-1} + (a-1)(\mu_I+1).
\end{equation}
Furthermore, if any of the terms $\tilde{w}^{(1)},\dots,\tilde{w}^{(a-1)}$ of the associated sequence satisfying $(3.17)$ is not contained in $U^I_{\min}$, then
\begin{equation}
{\rm ord}_p\:\bigg(\prod_{i=0}^{a-1} \theta_{w^{(i)}}(\hat{\lambda}^{p^i})\bigg)\geq \frac{pu_{n+1}-v_{n+1}}{p-1} + (a-1)(\mu_I+1)+1.
\end{equation}
\end{lemma}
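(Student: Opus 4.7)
The plan is to combine the two numbered inequalities (3.21) and (3.22) that precede the lemma; essentially all of the real work has already been done in the paragraphs leading up to the statement. Specifically, the construction of the auxiliary sequence $\tilde{w}^{(0)},\dots,\tilde{w}^{(a)}$, the telescoping identity (3.19), and the resulting rewriting (3.20)--(3.21) of the estimate from (3.15) already bring the bound into a form that depends only on the $(n{+}1)$-st coordinates of the $\tilde{w}^{(i)}$. What remains is simply to insert the coordinate bound (3.22).

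For the first inequality, I would observe that the inductive construction guarantees $\tilde{w}^{(i)}\in U^I$ for each $i=1,\dots,a-1$, so by the first alternative of (3.22) we have $\tilde{w}^{(i)}_{n+1}\geq \mu_I+1$. Summing yields
\[ \sum_{i=1}^{a-1}\tilde{w}^{(i)}_{n+1}\;\geq\;(a-1)(\mu_I+1), \]
and substituting this into (3.21) gives exactly the claimed lower bound $\frac{pu_{n+1}-v_{n+1}}{p-1} + (a-1)(\mu_I+1)$.

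For the second inequality, suppose some $\tilde{w}^{(i_0)}$ with $1\leq i_0\leq a-1$ fails to lie in $U^I_{\min}$. Then the second alternative of (3.22) upgrades $\tilde{w}^{(i_0)}_{n+1}\geq\mu_I+1$ to $\tilde{w}^{(i_0)}_{n+1}\geq\mu_I+2$, while all other $\tilde{w}^{(i)}_{n+1}$ with $1\leq i\leq a-1$ still satisfy $\tilde{w}^{(i)}_{n+1}\geq\mu_I+1$. This contributes an additional $+1$ to the sum, and inserting the improved bound into (3.21) produces the strengthened estimate.

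The only candidate for a ``hard step'' would be the passage from (3.15) to (3.21), which is handled by the inductive construction of $\{\tilde{w}^{(i)}\}$ together with the telescoping identity, and that is carried out before the lemma is stated. Thus the proof of the lemma itself is essentially a packaging step: one simply reads off the needed lower bounds on $\tilde{w}^{(i)}_{n+1}$ from the definition of $U^I$ and $U^I_{\min}$, and substitutes them into (3.21). I would expect these two separated inequalities to be used later to control the principal contribution to $\det(I-q^{n+1-|I|}t\alpha_{\hat\lambda}\mid L_0^I)$ modulo $p$, with the sharper bound accounting for off-diagonal or higher-order terms.
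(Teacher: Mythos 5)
Your proof is correct and matches the paper's: the authors likewise deduce the lemma directly from (3.21) by bounding $\sum_{i=1}^{a-1}\tilde{w}^{(i)}_{n+1}$ below via the dichotomy (3.22), with the extra $+1$ coming from any $\tilde{w}^{(i)}\notin U^I_{\min}$. No gaps; this is exactly the intended packaging step.
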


Our desired estimate for $\theta_{0,qu-v}(\hat{\lambda})$ now follows from (3.14).
\begin{corollary}
For $u,v\in U^I$ we have
\begin{equation}
{\rm ord}_p\: \theta_{0,qu-v}(\hat{\lambda})\geq \frac{pu_{n+1}-v_{n+1}}{p-1} + (a-1)(\mu_I+1). 
\end{equation}
\end{corollary}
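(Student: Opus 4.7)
The plan is straightforward: the corollary should follow by applying the preceding lemma termwise to the expansion (3.14) of $\theta_{0,qu-v}(\hat{\lambda})$. Indeed, (3.14) expresses $\theta_{0,qu-v}(\hat{\lambda})$ as a finite sum of products $\prod_{i=0}^{a-1}\theta_{w^{(i)}}(\hat{\lambda}^{p^i})$, indexed by tuples $(w^{(0)},\dots,w^{(a-1)})\in(\mathbb{N}A)^a$ satisfying (3.16). Since the $p$-adic valuation is non-archimedean, we have
\[ {\rm ord}_p\:\theta_{0,qu-v}(\hat{\lambda})\geq \min_{(w^{(0)},\dots,w^{(a-1)})}{\rm ord}_p\:\bigg(\prod_{i=0}^{a-1}\theta_{w^{(i)}}(\hat{\lambda}^{p^i})\bigg), \]
so it suffices to bound each term uniformly from below.

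For this, I would invoke inequality (3.23) of the preceding lemma, which provides exactly the bound $\frac{pu_{n+1}-v_{n+1}}{p-1}+(a-1)(\mu_I+1)$ for each individual product, provided one can construct the associated sequence $\tilde{w}^{(0)},\dots,\tilde{w}^{(a)}$ in $U^I$ with $\tilde{w}^{(0)}=v$ and $\tilde{w}^{(a)}=u$. The only hypothesis required for that inductive construction is that both $u$ and $v$ lie in $U^I$, which is precisely the hypothesis of the corollary. Thus every term on the right-hand side of (3.14) satisfies the desired bound, and the corollary follows.

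There is no real obstacle here: the substantive work — the inductive construction of $\{\tilde{w}^{(i)}\}$, the identity (3.19) obtained by summing (3.17), and the ensuing valuation estimate — was already carried out in the preceding lemma, and the corollary amounts to observing that these bounds are uniform in the decomposition $(w^{(0)},\dots,w^{(a-1)})$ and therefore pass to the sum (3.14).
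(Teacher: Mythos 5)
Your proposal is correct and matches the paper's (one-sentence) argument exactly: the estimate follows from applying Lemma 3.23 termwise to the finite sum (3.14) and using the ultrametric inequality, the only hypothesis needed for the associated sequence $\{\tilde{w}^{(i)}\}$ being $u,v\in U^I$.
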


\section{The action of $\alpha_{\hat{\lambda}}$ on $L_0^I$}

In this section, we use Corollary 3.26 to study the action of $\alpha_{\hat{\lambda}}$ on~$L_0^I$.  
From (3.7) and the formula of Serre\cite[Proposition~7]{S} we have
\begin{equation}
\det(I-t\alpha_{\hat{\lambda}}\mid L_0^I) = \sum_{m=0}^\infty a^I_mt^m, 
\end{equation}
where
\begin{equation}
a^I_m = (-1)^m \sum_{U_m\subseteq U^I} \sum_{\sigma\in{\mathcal S}_m} {\rm sgn}(\sigma)\prod_{u\in U_m} \theta_{0,qu-\sigma(u)}(\hat{\lambda})\in {\mathbb Q}_p(\zeta_{q-1},\gamma_0),
\end{equation}
the outer sum is over all subsets $U_m\subseteq U^I$ of cardinality $m$, and ${\mathcal S}_m$ is the group of permutations on $m$ objects.  
\begin{proposition}
The coefficient $a^I_m$  is divisible by $q^{m(\mu_I+1)}$ and satisfies the congruence
\begin{equation}
 a^I_m\equiv (-1)^m \sum_{U_m\subseteq U_{\min}^I} \sum_{\sigma\in{\mathcal S}_m} {\rm sgn}(\sigma)\prod_{u\in U_m} \theta_{0,qu-\sigma(u)}(\hat{\lambda}) \pmod{pq^{m(\mu_I+1)}}.
\end{equation}
In particular, $a^I_m\equiv 0\pmod{pq^{m(\mu_I+1)}}$ if $m>|U^I_{\min}|$.
\end{proposition}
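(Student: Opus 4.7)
The plan is to apply Corollary 3.26 termwise to the formula (4.2) for $a^I_m$, and then exploit the fact that the inner sum runs over permutations $\sigma$ of $U_m$, so that $\sum_{u\in U_m}\sigma(u)_{n+1} = \sum_{u\in U_m} u_{n+1}$.

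First, for any subset $U_m\subseteq U^I$ of cardinality $m$ and any $\sigma\in{\mathcal S}_m$ (viewed as a permutation of $U_m$), Corollary 3.26 gives
\[ {\rm ord}_p\:\theta_{0,qu-\sigma(u)}(\hat\lambda) \geq \frac{pu_{n+1} - \sigma(u)_{n+1}}{p-1} + (a-1)(\mu_I+1) \]
for each $u\in U_m$. Summing over $u\in U_m$ and using that $\sigma$ permutes $U_m$, so that $\sum_u(pu_{n+1}-\sigma(u)_{n+1}) = (p-1)\sum_u u_{n+1}$, yields
\[ {\rm ord}_p\:\prod_{u\in U_m}\theta_{0,qu-\sigma(u)}(\hat\lambda) \geq \sum_{u\in U_m} u_{n+1} + m(a-1)(\mu_I+1). \]

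Next, I would split into two cases based on whether $U_m\subseteq U^I_{\min}$. Since each $u\in U^I$ satisfies $u_{n+1}\geq\mu_I+1$ with equality exactly when $u\in U^I_{\min}$, the sum $\sum_{u\in U_m} u_{n+1}$ equals $m(\mu_I+1)$ when $U_m\subseteq U^I_{\min}$ and is $\geq m(\mu_I+1)+1$ otherwise. Combined with the bound above and the identity $q^{m(\mu_I+1)} = p^{ma(\mu_I+1)}$, this gives
\[ {\rm ord}_p\:\prod_{u\in U_m}\theta_{0,qu-\sigma(u)}(\hat\lambda) \geq ma(\mu_I+1) \]
in all cases, with the stronger bound $ma(\mu_I+1)+1$ whenever $U_m\not\subseteq U^I_{\min}$. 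The first bound shows every term of (4.2) is divisible by $q^{m(\mu_I+1)}$, and the second shows that terms with $U_m\not\subseteq U^I_{\min}$ contribute $0\pmod{pq^{m(\mu_I+1)}}$, proving the congruence (4.4). The final assertion that $a^I_m\equiv 0\pmod{pq^{m(\mu_I+1)}}$ for $m>|U^I_{\min}|$ is immediate, since in that range no $m$-subset of $U^I$ can be contained in $U^I_{\min}$, so every term of (4.2) falls into the stronger-estimate case.

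There is no real obstacle here: the argument is essentially bookkeeping once one observes that the permutation structure of $\sigma$ cancels the $\sigma(u)_{n+1}$ contributions in the exponents, leaving a sum that is controlled precisely by how many indices $u$ sit in $U^I_{\min}$ versus strictly above it.
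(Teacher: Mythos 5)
Your proof is correct and follows essentially the same route as the paper: apply Corollary 3.26 termwise, use that $\sigma$ permutes $U_m$ to reduce the exponent sum to $(p-1)\sum_u u_{n+1}$, and then split according to whether $U_m\subseteq U^I_{\min}$ to get the extra $+1$ in the non-minimal case. Nothing is missing.
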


\begin{proof}
If $U_m\subseteq U^I$ is a subset of cardinality $m$ and $\sigma$ is a permutation of $U_m$, then by (3.27)
\begin{align*}
{\rm ord}_p\bigg(\prod_{u\in U_m}\theta_{0,qu-\sigma(u)}(\hat{\lambda})\bigg) &\geq m(a-1)(\mu_I+1) + \sum_{u\in U_m} \frac{pu_{n+1}-\sigma(u)_{n+1}}{p-1} \\
 &= m(a-1)(\mu_I+1) + \sum_{u\in U_m} u_{n+1} \\
 &\geq ma(\mu_I+1)
\end{align*}
since $u\in U_m$ implies $u_{n+1}\geq \mu_I+1$.  It follows from (4.2) that $a^I_m$ is divisible by~$q^{m(\mu_I+1)}$.  Furthermore, $u_{n+1}\geq\mu_I+2$ if $u\not\in U^I_{\min}$, so 
\begin{equation*}
{\rm ord}_p\bigg(\prod_{u\in U_m}\theta_{0,qu-\sigma(u)}(\hat{\lambda})\bigg) \geq ma(\mu_I+1)+1\quad\text{if $U_m\not\subseteq U^I_{\min}$.}
\end{equation*}
The congruence (4.4) now follows from (4.2).
\end{proof}

As an immediate corollary of Proposition 4.3, we have the following result.
\begin{corollary}
The reciprocal roots of $\det(I-t\alpha_{\hat{\lambda}}\mid L_0^I)$ are all divisible by $q^{\mu_I+1}$.  
\end{corollary}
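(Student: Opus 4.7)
The plan is to deduce Corollary 4.5 directly from Proposition 4.3 by a Newton polygon argument. Write the Fredholm series as $f(t) = \det(I-t\alpha_{\hat{\lambda}}\mid L_0^I) = \sum_{m=0}^\infty a^I_m t^m$. Since $\alpha_{\hat{\lambda}}$ is completely continuous on $L_0^I$, Serre's theorem guarantees that $f(t)$ is $p$-adically entire and admits a convergent product factorization $f(t) = \prod_i (1-\rho_i t)$, with reciprocal roots $\rho_i \to 0$. The $p$-adic valuations of the $\rho_i$ are encoded as the slopes of the Newton polygon of $f(t)$.

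Next I would translate Proposition 4.3 into a Newton polygon estimate. That proposition gives $\operatorname{ord}_p a^I_m \geq m(a)(\mu_I+1) = m \cdot \operatorname{ord}_p q^{\mu_I+1}$. Together with $a^I_0 = 1$, this means every point $(m, \operatorname{ord}_p a^I_m)$ lies on or above the line $y = a(\mu_I+1)\, x$ through the origin. The Newton polygon of $f(t)$, being the lower convex hull of these points starting from $(0,0)$, therefore also lies on or above this line, so each of its segments has slope at least $a(\mu_I+1)$.

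Finally, since the slopes of the Newton polygon record the values $\operatorname{ord}_p \rho_i$ for the reciprocal roots $\rho_i$, we obtain $\operatorname{ord}_p \rho_i \geq a(\mu_I+1) = \operatorname{ord}_p q^{\mu_I+1}$ for all $i$, which is exactly the divisibility statement $q^{\mu_I+1} \mid \rho_i$ in ${\mathbb C}_p$.

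There is essentially no obstacle here: all the substantive work has been done in Proposition 4.3. The only mild care needed is to recall the dictionary between coefficient valuations, Newton polygon slopes, and valuations of reciprocal roots for a $p$-adic entire function in Serre's framework, so that the divisibility estimate on $a^I_m$ transfers to a divisibility estimate on each $\rho_i$ individually (and not just on symmetric functions of them).
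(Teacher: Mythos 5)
Your argument is correct and is exactly the (unwritten) reasoning behind the paper's remark that Corollary 4.5 is ``an immediate corollary of Proposition 4.3'': the coefficient bound $\operatorname{ord}_p a^I_m \geq ma(\mu_I+1)$ forces every slope of the Newton polygon of the Fredholm determinant to be at least $a(\mu_I+1)=\operatorname{ord}_p q^{\mu_I+1}$, and the slopes are the valuations of the reciprocal roots. Nothing further is needed.
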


Corollary 4.5 allows us to analyze the terms on the right-hand side of (3.5).
\begin{proposition}
The reciprocal roots of $\det(I-q^{n+1-|I|}t\alpha_{\hat{\lambda}}\mid L_0^I)$ are divisible by $q^{\mu+2}$ unless either $|I|=n+1$ or $|I|=n$ and $n$ is divisible by $d$, in which case they are divisible by $q^{\mu+1}$.  
\end{proposition}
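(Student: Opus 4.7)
The plan is to apply Corollary~4.5 directly. By Corollary~4.5, every reciprocal root of $\det(I-t\alpha_{\hat{\lambda}}\mid L_0^I)$ is divisible by $q^{\mu_I+1}$. Substituting $q^{n+1-|I|}t$ for $t$ scales each reciprocal root by $q^{n+1-|I|}$, so the reciprocal roots of $\det(I-q^{n+1-|I|}t\alpha_{\hat{\lambda}}\mid L_0^I)$ are divisible by $q^{n+2-|I|+\mu_I}$. The proposition thus reduces to the arithmetic inequalities $n-|I|+\mu_I\geq\mu$ outside the two exceptional cases and $n-|I|+\mu_I\geq\mu-1$ in those cases.

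Writing $\mu_I=\lceil |I|/d\rceil-1$ and $\mu=\lceil(n+1)/d\rceil-1$, I would then do a case analysis on $|I|$. When $|I|=n+1$ we have $\mu_I=\mu$, so the exponent equals $\mu+1$. When $|I|=n$ and $d\mid n$ we have $\mu_I=n/d-1=\mu-1$, again giving exponent $\mu+1$. These are the two exceptional cases. When $|I|=n$ and $d\nmid n$ we have $\lceil n/d\rceil=\lceil(n+1)/d\rceil$ (moving from $n$ to $n+1$ does not cross a multiple of $d$ in this situation), so $\mu_I=\mu$ and the exponent becomes $\mu+2$.

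For the remaining range $|I|\leq n-1$, subadditivity of the ceiling gives
\[ \mu-\mu_I = \lceil(n+1)/d\rceil - \lceil|I|/d\rceil \leq \lceil(n+1-|I|)/d\rceil, \]
so it suffices to check $n-|I|\geq\lceil(n+1-|I|)/d\rceil$. Setting $k=n-|I|\geq 1$, this becomes $k\geq\lceil(k+1)/d\rceil$, equivalently $k(d-1)\geq 1$, which is obvious for $k\geq 1$ and $d\geq 2$.

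The main obstacle, if one can call it that, is merely keeping the case analysis straight at the boundary $|I|=n$, where the conclusion depends on whether $d\mid n$; all of the serious $p$-adic analysis is already packaged inside Corollary~4.5, so no further estimates on $\alpha_{\hat{\lambda}}$ are required and the proof is a purely numerical verification.
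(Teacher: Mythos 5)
Your proof is correct and follows essentially the same route as the paper: both reduce the statement to Corollary~4.5 plus the purely arithmetic verification that $n+1-|I|+\lceil |I|/d\rceil$ equals $\mu+1$ in the two exceptional cases and is at least $\mu+2$ otherwise. The only (cosmetic) difference is that for $|I|\leq n-1$ you invoke subadditivity of the ceiling to get a uniform bound, whereas the paper checks $|I|=n-1$ by hand and then observes that the exponent is nondecreasing as $|I|$ decreases.
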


\begin{proof}
Corollary 4.5 and the definition of $\mu_I$ imply that the reciprocal roots of $\det(I-q^{n+1-|I|}t\alpha_{\hat{\lambda}}\mid L_0^I)$ are divisible by $q$ to the power
\begin{equation}
n+1-|I| + \bigg\lceil \frac{|I|}{d}\bigg\rceil.
\end{equation}
If $|I|=n+1$, this reduces to $\mu+1$.  Suppose $|I|=n$.  From the definition of $\mu$ we have $n=\mu d + r$ with $0\leq r\leq d-1$.  The expression (4.7) then reduces to $1+\lceil (\mu d+r)/d\rceil$, which equals $\mu+2$ if $r>0$ and equals $\mu+1$ if $r=0$.  

If $|I|=n-1$, expression (4.7) reduces to $2+\lceil (\mu d+r-1)/d\rceil$, which equals $\mu+3$ if $r>1$ and equals $\mu+2$ if $r=0,1$.  Finally, note that expression (4.7) cannot decrease when $|I|$ decreases, so expression (4.7) will be $\geq \mu+2$ for $|I|<n-1$.
\end{proof}

From (3.5) and Proposition 4.6 we get the following result.
\begin{proposition}
If $n$ is not divisible by $d$, then
\begin{equation}
P_{\lambda}(q^{-\mu}t)\equiv\det(I-q^{-\mu-1}t\alpha_{\hat{\lambda}}\mid L_0^S) \pmod{q}.
\end{equation}
If $n$ is divisible by $d$, then
\begin{equation}
P_{\lambda}(q^{-\mu}t)\equiv \frac{\det(I-q^{-\mu-1}t\alpha_{\hat{\lambda}}\mid L_0^S)}{\prod_{i=0}^n \det(I-q^{-\mu}t\alpha_{\hat{\lambda}}\mid L_0^{S\setminus\{i\}})}\pmod{q}.
\end{equation}
\end{proposition}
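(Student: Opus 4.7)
The plan is to substitute $t\mapsto q^{-\mu-1}t$ in the product formula (3.5) and then sort the factors on the right-hand side by how their reciprocal roots behave $p$-adically, keeping only those that do not reduce to $1$ modulo $q$.

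First, I would replace $t$ by $q^{-\mu-1}t$ in (3.5). The left-hand side becomes $P_\lambda(q^{-\mu}t)$ and the $I$-th factor becomes $\det\bigl(I-q^{n-\mu-|I|}t\alpha_{\hat{\lambda}}\mid L_0^I\bigr)^{(-1)^{n+1+|I|}}$. Writing $\det(I-q^{n+1-|I|}t\alpha_{\hat{\lambda}}\mid L_0^I)=\prod_j(1-\beta_{I,j}t)$, Proposition~4.6 asserts that $\mathrm{ord}_q\beta_{I,j}\geq c_I:=n+1-|I|+\lceil|I|/d\rceil$. Hence after the substitution the reciprocal roots $q^{-\mu-1}\beta_{I,j}$ are divisible by $q^{c_I-\mu-1}$; in particular this exponent is always $\geq 0$, so every factor lies in $1+t\mathbb{Z}_p[[t]]$ and reduction modulo $q$ is well defined inside the multiplicative group $1+t\mathbb{F}_p[[t]]$.

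Second, I would separate the factors into those with $c_I\geq\mu+2$ and those with $c_I=\mu+1$. In the first case all reciprocal roots are divisible by $q$, so the factor reduces to $1\pmod q$ and can be dropped. By Proposition~4.6, only two kinds of $I$ achieve $c_I=\mu+1$: the full set $I=S$ (always), and the complements $I=S\setminus\{i\}$ when $d\mid n$. For $I=S$ the sign $(-1)^{n+1+|I|}$ equals $1$, yielding the factor $\det(I-q^{-\mu-1}t\alpha_{\hat{\lambda}}\mid L_0^S)$ in the numerator; for $|I|=n$ the sign is $-1$, so each of the $n+1$ determinants $\det(I-q^{-\mu}t\alpha_{\hat{\lambda}}\mid L_0^{S\setminus\{i\}})$ lands in the denominator.

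Collecting these observations yields (4.9) when $d\nmid n$ and (4.10) when $d\mid n$. There is no genuine obstacle: once (3.5) is in hand and Proposition~4.6 has pinpointed the two exceptional index types, the argument is just bookkeeping of $p$-adic valuations and signs. The one small point to verify is that each omitted factor actually belongs to $1+tq\mathbb{Z}_p[[t]]$ so that its omission is legitimate modulo $q$, which is precisely the content of the estimate $c_I-\mu-1\geq 1$ in those cases.
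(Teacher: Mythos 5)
Your proposal is correct and is essentially the argument the paper intends (the paper simply states that Proposition 4.8 follows "from (3.5) and Proposition 4.6" without writing out the bookkeeping): substitute $t\mapsto q^{-\mu-1}t$ in (3.5), discard the factors whose reciprocal roots are divisible by $q^{\mu+2}$ since they reduce to $1\pmod q$, and keep the $I=S$ factor (and, when $d\mid n$, the $|I|=n$ factors in the denominator) with the correct signs. The only quibble is cosmetic: the reductions live in $1+t(\mathcal{O}/q\mathcal{O})[[t]]$ for the relevant ring of integers rather than $1+t\mathbb{F}_p[[t]]$, but this does not affect the argument.
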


\section{Proof of Theorem 1.4}

It follows from (4.9) that $R_\lambda(q^{-\mu}t)\equiv 1\pmod{q}$ if $n$ is not divisible by $d$.  To establish Theorem~1.4, it remains to prove the congruence for $Q_\lambda(q^{-\mu}t)$.

Consider the matrix $B^I(\hat{\lambda}) = [B^I_{uv}(\hat{\lambda})]_{u,v\in U^I_{\min}}$ defined by
\[ B^I_{uv}(\hat{\lambda}) = \theta_{0,qu-v}(\hat{\lambda}). \]
We have
\begin{equation}
\det\big(I-tB^I(\hat{\lambda})\big) = \sum_{m=0}^{|U^I_{\min}|} b_m^I t^m,
\end{equation}
where
\begin{equation}
b^I_m = (-1)^m \sum_{U_m\subseteq U^I_{\min}} \sum_{\sigma\in{\mathcal S}_m} {\rm sgn}(\sigma) \prod_{u\in U_m} \theta_{0,qu-\sigma(u)}(\hat{\lambda}).
\end{equation}

Proposition 4.3 implies $b_m^I\equiv 0\pmod{q^{m(\mu_I+1)}}$ and 
\[ b_m^I\equiv a_m^I\pmod{pq^{m(\mu_I+1)}}, \] 
which establishes the following result.
\begin{proposition} 
We have
\[ \det(I-q^{-\mu_I-1}t\alpha_{\hat{\lambda}}\mid L^I_0) \equiv \det(I-q^{-\mu_I-1}tB^I(\hat{\lambda}))\pmod{p}. \]
\end{proposition}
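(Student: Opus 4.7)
The strategy is to compare both determinants coefficient by coefficient in $t$ and reduce the claim to the congruence on $a_m^I$ and $b_m^I$ that has already been extracted from Proposition~4.3.

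More precisely, the plan is as follows. First, rescale the Fredholm determinant (4.1) and the characteristic polynomial (5.1) so that the coefficient of $t^m$ in $\det(I-q^{-\mu_I-1}t\alpha_{\hat\lambda}\mid L_0^I)$ is $q^{-m(\mu_I+1)}a_m^I$, and the coefficient of $t^m$ in $\det(I-q^{-\mu_I-1}tB^I(\hat\lambda))$ is $q^{-m(\mu_I+1)}b_m^I$. Second, appeal to Proposition~4.3: it asserts both that $a_m^I$ is divisible by $q^{m(\mu_I+1)}$, and that
\[ a_m^I \equiv (-1)^m \sum_{U_m\subseteq U^I_{\min}}\sum_{\sigma\in\mathcal{S}_m}\mathrm{sgn}(\sigma)\prod_{u\in U_m}\theta_{0,qu-\sigma(u)}(\hat\lambda) \pmod{pq^{m(\mu_I+1)}}. \]
The inner double sum is exactly $b_m^I$ when $m\leq|U^I_{\min}|$, and is empty (hence equal to $0=b_m^I$) when $m>|U^I_{\min}|$. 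Thus $a_m^I-b_m^I\equiv 0\pmod{pq^{m(\mu_I+1)}}$ for every $m\geq 0$, and dividing through by $q^{m(\mu_I+1)}$ — which is legitimate since both $a_m^I$ and $b_m^I$ are divisible by $q^{m(\mu_I+1)}$ — yields
\[ q^{-m(\mu_I+1)}a_m^I \equiv q^{-m(\mu_I+1)}b_m^I \pmod{p}. \]
Summing over $m$ recovers the claimed congruence between the two rescaled determinants.

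Essentially every step is unpacking definitions; the only point requiring any care is the verification that the division by $q^{m(\mu_I+1)}$ preserves $p$-integrality and converts a congruence modulo $pq^{m(\mu_I+1)}$ into one modulo $p$, but this is immediate from the divisibility statements in Proposition~4.3 (and the identical divisibility for $b_m^I$, which follows by applying the same ordinal estimate (3.27) term by term in (5.2)). There is no substantive obstacle; the proposition is a direct corollary of the sharper statement (4.4).
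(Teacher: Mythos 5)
Your proposal is correct and is essentially identical to the paper's argument: the paper likewise identifies the coefficient of $t^m$ in $\det(I-tB^I(\hat\lambda))$ as $b_m^I$, observes that this is exactly the right-hand side of the congruence (4.4) (and vanishes for $m>|U^I_{\min}|$), and deduces $a_m^I\equiv b_m^I\pmod{pq^{m(\mu_I+1)}}$ together with the divisibility of both by $q^{m(\mu_I+1)}$, which gives the mod $p$ congruence after the substitution $t\mapsto q^{-\mu_I-1}t$. No further comment is needed.
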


Combined with Proposition 4.8, this gives the following congruences.
\begin{corollary}
If $n$ is not divisible by $d$, then
\begin{equation}
P_{\lambda}(q^{-\mu}t)\equiv\det\big(I-q^{-\mu-1}tB^S(\hat{\lambda})\big) \pmod{p}.
\end{equation}
If $n$ is divisible by $d$, then
\begin{equation}
P_{\lambda}(q^{-\mu}t)\equiv \frac{\det\big(I-q^{-\mu-1}tB^S(\hat{\lambda})\big)}{\prod_{i=0}^n \det\big(I-q^{-\mu}tB^{S\setminus\{i\}}(\hat{\lambda})\big)}\pmod{p}.
\end{equation}
\end{corollary}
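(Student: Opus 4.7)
The plan is to combine Proposition 4.8, which expresses $P_\lambda(q^{-\mu}t)$ modulo $q$ in terms of Fredholm determinants of the Frobenius operator $\alpha_{\hat\lambda}$ on various $L_0^I$, with Proposition 5.3, which replaces each such infinite-dimensional Fredholm determinant by the determinant of the finite matrix $B^I(\hat\lambda)$ modulo $p$. Since $p \mid q$, any congruence modulo $q$ is \emph{a fortiori} a congruence modulo $p$, so both ingredients can be viewed modulo $p$ and substituted into one another.

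The only real bookkeeping is to check that the normalizing powers of $q$ match up. First I would record that $\mu_S = \mu$, since $\mu_S + 1 = \lceil (n+1)/d \rceil = \mu + 1$; thus Proposition 5.3 applied with $I = S$ reads
\[
\det\bigl(I - q^{-\mu-1}t\alpha_{\hat\lambda} \mid L_0^S\bigr) \equiv \det\bigl(I - q^{-\mu-1}t B^S(\hat\lambda)\bigr) \pmod{p},
\]
which is exactly the factor appearing in (4.9) and (4.10). In the non-divisible case this yields (5.5) immediately.

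For the divisible case $d \mid n$, I would verify the exponent for the denominator factors. For $I = S \setminus \{i\}$ one has $|I| = n$ and $\mu_I + 1 = \lceil n/d \rceil = n/d$. Since $d \geq 2$ and $d \mid n$, $d \nmid (n+1)$, so $\mu + 1 = \lceil (n+1)/d \rceil = n/d + 1$, giving $\mu = n/d$ and hence $\mu_I + 1 = \mu$. Thus Proposition 5.3 with $I = S \setminus \{i\}$ becomes
\[
\det\bigl(I - q^{-\mu}t\alpha_{\hat\lambda} \mid L_0^{S\setminus\{i\}}\bigr) \equiv \det\bigl(I - q^{-\mu}t B^{S\setminus\{i\}}(\hat\lambda)\bigr) \pmod{p},
\]
matching the $q$-power in the denominator of (4.10). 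Substituting each numerator and denominator factor in (4.10) and reducing modulo $p$ yields (5.6).

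There is no real obstacle here; the proof is essentially a verification that Propositions 4.8 and 5.3 are compatible and that the shifts $\mu_I$ appearing in the latter align, for the specific subsets $I = S$ and $I = S \setminus \{i\}$, with the uniform shift $\mu$ (respectively $\mu-1$) appearing in the former. The closest thing to a subtlety is the indexing of $\mu_I$ versus $\mu$, which is settled by the case analysis above.
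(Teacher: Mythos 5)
Your proposal is correct and follows exactly the paper's route: the paper proves Corollary 5.4 by combining Proposition 5.3 with Proposition 4.8, which is precisely what you do. Your verification that $\mu_S=\mu$ and that $\mu_{S\setminus\{i\}}+1=\mu$ when $d\mid n$ correctly supplies the indexing check that the paper leaves implicit.
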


To simplify (5.5) and (5.6) further, we restate Lemma 3.23 in the special case where $u,v\in U^I_{\min}$, i.e., $u_{n+1} = v_{n+1} =\mu_I+1$.  
\begin{lemma}
For $u,v\in U^I_{\min}$ and $w^{(0)},\dots,w^{(a-1)}$ as in $(3.16)$, we have
\begin{equation}
{\rm ord}_p\:\bigg(\prod_{i=0}^{a-1} \theta_{w^{(i)}}(\hat{\lambda}^{p^i})\bigg)\geq a(\mu_I+1).
\end{equation}
Furthermore, if any of the terms $\tilde{w}^{(1)},\dots,\tilde{w}^{(a-1)}$ of the associated sequence satisfying $(3.17)$ is not contained in $U^I_{\min}$, then
\begin{equation}
{\rm ord}_p\:\bigg(\prod_{i=0}^{a-1} \theta_{w^{(i)}}(\hat{\lambda}^{p^i})\bigg)\geq a(\mu_I+1)+1.
\end{equation}
\end{lemma}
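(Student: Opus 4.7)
The plan is to observe that Lemma 5.7 is a direct specialization of Lemma 3.23 to the subset $U^I_{\min}\subseteq U^I$, so the entire proof reduces to substituting $u_{n+1}=v_{n+1}=\mu_I+1$ into the bounds (3.23) and (3.24) and simplifying the resulting fraction.

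Concretely, for $u,v\in U^I_{\min}$ the definition of $U^I_{\min}$ gives $u_{n+1}=v_{n+1}=\mu_I+1$. Hence
\[
\frac{pu_{n+1}-v_{n+1}}{p-1}=\frac{(p-1)(\mu_I+1)}{p-1}=\mu_I+1.
\]
Plugging this into the bound (3.23) of Lemma 3.23 yields
\[
\frac{pu_{n+1}-v_{n+1}}{p-1}+(a-1)(\mu_I+1)=(\mu_I+1)+(a-1)(\mu_I+1)=a(\mu_I+1),
\]
which is exactly the first inequality in the statement. The hypothesis (3.16) on the $w^{(i)}$ is identical in both lemmas, so no additional verification is needed.

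For the sharper bound, the hypothesis that some $\tilde w^{(i)}$ with $1\leq i\leq a-1$ lies outside $U^I_{\min}$ is precisely the trigger for (3.24) in Lemma 3.23. Applying (3.24) under the same substitution $u_{n+1}=v_{n+1}=\mu_I+1$ gives $a(\mu_I+1)+1$, which is the second inequality. There is no genuine obstacle here since the inductive construction of the sequence $\{\tilde w^{(i)}\}$ in Section 3 makes no special use of whether $u,v$ lie in $U^I$ or in $U^I_{\min}$; once Lemma 3.23 is established in the general case, the minimal case follows by arithmetic alone, and the lemma can be stated with a one-line proof consisting of this substitution.
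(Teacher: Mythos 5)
Your proof is correct and is exactly what the paper does: Lemma 5.7 is introduced as a restatement of Lemma 3.23 in the special case $u_{n+1}=v_{n+1}=\mu_I+1$, and your substitution $\frac{pu_{n+1}-v_{n+1}}{p-1}=\mu_I+1$ reproduces the bounds $a(\mu_I+1)$ and $a(\mu_I+1)+1$ verbatim.
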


Applying Lemma 5.7 to Equations (3.14) and (3.17) gives the following congruence: for $u,v\in  U^I_{\min}$, 
\begin{multline}
\theta_{0,qu-v}(\hat{\lambda}) \equiv \\ 
\sum_{\substack{\tilde{w}^{(1)},\dots,\tilde{w}^{(a-1)}\in U_{I,\min}\\ \sum_{i=0}^{a-1} p^i(p\tilde{w}^{(i+1)}-\tilde{w}^{(i)}) = qu-v}} \prod_{i=0}^{a-1}\theta_{p\tilde{w}^{(i+1)}-\tilde{w}^{(i)}}(\hat{\lambda}^{p^i}) \pmod{pq^{\mu_I+1}},
\end{multline}
where $\tilde{w}^{(0)} = v$ and $\tilde{w}^{(a)} = u$.  

Let $C^I(\hat{\lambda}) = [C^I_{uv}(\hat{\lambda})]_{u,v\in U_{I,\min}}$ be the matrix 
\[ C^I_{uv}(\hat{\lambda}) = \theta_{pu-v}(\hat{\lambda}). \]
It is straightforward to check by induction on $a$ that the right-hand side of (5.10) is the $(u,v)$-entry in the matrix product
\[ C^I(\hat{\lambda}^{p^{a-1}})C^I(\hat{\lambda}^{p^{a-2}})\cdots C^I(\hat{\lambda}), \]
i.e., (5.10) implies the matrix congruence
\begin{equation}
B^I(\hat{\lambda})\equiv C^I(\hat{\lambda}^{p^{a-1}})C^I(\hat{\lambda}^{p^{a-2}})\cdots C^I(\hat{\lambda}) \pmod{pq^{\mu_I+1}}.
\end{equation}

We make explicit the matrix $C^I(\hat{\lambda})$.  Let $u,v\in U^I_{\min}$.  From (3.9) and (3.10) we have
\begin{equation}
\theta_{pu-v}(\hat{\lambda}) = \sum_{\substack{\nu\in{\mathbb N}^N\\ \sum_{j=1}^N \nu_j{\bf a}_j^+ = pu-v}} \bigg(\prod_{j=1}^N \theta_{\nu_j}\bigg)\hat{\lambda}^\nu.
\end{equation}
Since $u,v\in U^I_{\min}$, Lemma 2.1 implies that $\nu_j\leq p-1$ for all $j$ in the sum on the right-hand side of~(5.12).  It then follows from the definition of $\theta(t)$ that $\theta_{\nu_j} = \gamma_0^{\nu_j}/\nu_j!$.  Examining the last coordinate of the equation $\sum_{j=1}^N \nu_j{\bf a}_j^+ = pu-v$ gives $\sum_{j=1}^N \nu_j = (p-1)(\mu_I+1)$, so (5.12) can be simplified to
\begin{equation}
\theta_{pu-v}(\hat{\lambda}) = \gamma_0^{(p-1)(\mu_I+1)}\sum_{\substack{\nu\in{\mathbb N}^N\\ \sum_{j=1}^N \nu_j{\bf a}_j^+ = pu-v}} \frac{\hat{\lambda}^\nu}{\nu_1!\cdots\nu_N!}.
\end{equation}
Using (2.3), we obtain the relation between the matrices $C^I(\hat{\lambda})$ and~$A^I(\Lambda)$:
\begin{equation}
C^I(\hat{\lambda}) = (-\gamma_0^{p-1})^{\mu_I+1}A^I(\hat{\lambda}).
\end{equation}
From (5.11), we then obtain a relation between $A^I(\hat{\lambda})$ and $B^I(\hat{\lambda})$:
\begin{equation}
B^I(\hat{\lambda})\equiv (-\gamma_0^{p-1})^{a(\mu_I+1)} A^I(\hat{\lambda}^{p^{a-1}})A^I(\hat{\lambda}^{p^{a-2}})\cdots A^I(\hat{\lambda}) \pmod{pq^{\mu_I+1}}.
\end{equation}

Recall that $\sum_{i=0}^\infty \gamma_0^{p^i}/p^i = 0$ and that ${\rm ord}_p\:\gamma_0 = 1/p-1$.  In particular,
\[ {\rm ord}_p\: \frac{\gamma_0^{p^i}}{p^i} = \frac{p^i}{p-1} - i, \]
and since the right-hand side of this expression is an increasing function of $i$ for $i\geq 1$ we have
\[ \gamma_0 + \frac{\gamma_0^p}{p} \equiv 0 \pmod{\gamma_0 p^{p-1}}. \]
Multiplying by $p/\gamma_0$ then gives
\[ -\gamma_0^{p-1}\equiv p \pmod{p^p}. \]
It follows from this that
\[ (-\gamma_0^{p-1})^{a(\mu_I+1)}\equiv q^{\mu_I+1} \pmod{pq^{\mu_I+1}}, \]
so (5.15) may be simplified to
\begin{equation}
B^I(\hat{\lambda})\equiv q^{\mu_I+1} A^I(\hat{\lambda}^{p^{a-1}})A^I(\hat{\lambda}^{p^{a-2}})\cdots A^I(\hat{\lambda}) \pmod{pq^{\mu_I+1}}.
\end{equation}
Corollary 5.4 now implies the following congruences.
\begin{theorem}
If $n$ is not divisible by $d$, then
\begin{equation}
P_{\lambda}(q^{-\mu}t)\equiv\det\big(I-tA^S(\hat{\lambda}^{p^{a-1}})A^S(\hat{\lambda}^{p^{a-2}})\cdots A^S(\hat{\lambda})\big) \pmod{p}.
\end{equation}
If $n$ is divisible by $d$, then
\begin{multline}
P_{\lambda}(q^{-\mu}t)\equiv \\ 
\frac{\det\big(I-tA^S(\hat{\lambda}^{p^{a-1}})A^S(\hat{\lambda}^{p^{a-2}})\cdots A^S(\hat{\lambda})\big)}{\prod_{i=0}^n \det\big(I-tA^{S\setminus\{i\}}(\hat{\lambda}^{p^{a-1}})A^{S\setminus\{i\}}(\hat{\lambda}^{p^{a-2}})\cdots A^{S\setminus\{i\}}(\hat{\lambda})\big)}\pmod{p}.
\end{multline}
\end{theorem}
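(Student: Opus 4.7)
My plan is that the theorem follows by assembling the pieces already in place: Corollary 5.4 expresses $P_\lambda(q^{-\mu}t)$ modulo $p$ in terms of characteristic polynomials of the matrices $B^I(\hat\lambda)$ (scaled by $q^{-\mu_I-1}$), while (5.16) identifies these matrices, up to the scaling factor, with products of Frobenius iterates of $A^I$. So the bulk of the proof is just substitution and a determinant-of-a-congruent-matrix argument, with the two cases of the theorem corresponding to the two cases of Corollary 5.4.

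The key step is the following elementary observation. Suppose $M_1, M_2$ are square matrices over $\mathbb Z_p$ and $c \in \mathbb Z_p \setminus \{0\}$ satisfy the congruence
\[ M_1 \equiv c\, M_2 \pmod{pc}. \]
Then writing $M_1 = c M_2 + pc N$ with $N$ a $p$-integral matrix gives $c^{-1} M_1 = M_2 + pN$, so $c^{-1} M_1 \equiv M_2 \pmod p$, and therefore $\det(I - t\, c^{-1} M_1) \equiv \det(I - t M_2) \pmod p$ in $\mathbb Z_p[t]$. I would apply this with $M_1 = B^I(\hat\lambda)$, $M_2 = A^I(\hat\lambda^{p^{a-1}})\cdots A^I(\hat\lambda)$ (which has $p$-integral entries by Lemma 1.3 / 2.1), and $c = q^{\mu_I+1}$, using (5.16) to supply the hypothesis. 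This yields
\[ \det\bigl(I - q^{-\mu_I-1} t\, B^I(\hat\lambda)\bigr) \equiv \det\bigl(I - t\, A^I(\hat\lambda^{p^{a-1}})\cdots A^I(\hat\lambda)\bigr) \pmod p. \]

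For the first assertion of the theorem ($d \nmid n$), I would take $I = S$, so $\mu_S = \mu$, and substitute the displayed congruence directly into (5.5) of Corollary 5.4 to obtain (5.17). For the second assertion ($d \mid n$), I would apply the displayed congruence both with $I = S$ (giving the numerator) and with $I = S \setminus \{i\}$ for each $i$. In this latter case one must check that $\mu_{S \setminus \{i\}} = \mu - 1$: since $d \mid n$ we have $n = \mu d$, so $\lceil n/d\rceil = \mu$, i.e.\ $\mu_{S\setminus\{i\}} + 1 = \mu$. Thus the scaling factor in (5.6) is exactly $q^{-\mu} = q^{-\mu_{S\setminus\{i\}}-1}$, matching the hypothesis needed for the key step. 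Substituting into (5.6) yields (5.18), the quotient being well-defined mod $p$ because each denominator factor has constant term $1$.

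There is no real obstacle here; the only thing that needs care is the bookkeeping of the powers of $q$ used to scale $B^I$, and in particular the verification that $\mu_{S\setminus\{i\}} = \mu - 1$ exactly when $d \mid n$, which is precisely the numerical coincidence that makes the denominator factors of (5.6) transform cleanly under the substitution.
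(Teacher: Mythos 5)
Your proposal is correct and follows essentially the same route as the paper: the paper likewise derives this theorem by substituting the congruence (5.16), $B^I(\hat\lambda)\equiv q^{\mu_I+1}A^I(\hat\lambda^{p^{a-1}})\cdots A^I(\hat\lambda)\pmod{pq^{\mu_I+1}}$, into the two cases of Corollary 5.4, with the same check that $\mu_{S\setminus\{i\}}+1=\mu$ when $d\mid n$ (carried out in Section 6). Your explicit justification of the step from $M_1\equiv cM_2\pmod{pc}$ to the congruence of characteristic polynomials is exactly the (unstated) elementary argument the paper relies on.
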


Since $A^S(\Lambda)$ is the matrix denoted $A(\Lambda)$ in the Introduction and since $\hat{\lambda}$ is the Teichm\"uller lifting of $\lambda$ we have $A^S(\hat{\lambda})\equiv \bar{A}^S(\lambda)\pmod{p}$.  Theorem~1.4 now follows from (5.18), and (5.19) is equivalent to
\begin{multline}
P_{\lambda}(q^{-\mu}t)\equiv \\ 
\frac{\det\big(I-t\bar{A}^S({\lambda}^{p^{a-1}})\bar{A}^S({\lambda}^{p^{a-2}})\cdots \bar{A}^S({\lambda})\big)}{\prod_{i=0}^n \det\big(I-t\bar{A}^{S\setminus\{i\}}({\lambda}^{p^{a-1}})\bar{A}^{S\setminus\{i\}}({\lambda}^{p^{a-2}})\cdots \bar{A}^{S\setminus\{i\}}({\lambda})\big)}\pmod{p}.
\end{multline}

\section{The case $d\,|\,n$}

We first give an example to show that Theorem 1.4 fails when $d\,|\, n$.  Consider the variety $X$ in ${\mathbb P}^n$ defined by the equation $x_1\cdots x_n=0$ (so we have $d=n$ and $\mu=1$).   A short calculation shows that its zeta function $Z(X,t)$ has the form (1.2) with
\[ P(t)^{(-1)^n} = \prod_{j=1}^{n-1} (1-q^jt)^{(-1)^{n-j}\binom{n-1}{j-1}}. \]
In particular, we have $R(q^{-1}t)\equiv (1-t)\pmod{q}$, contradicting Theorem~1.4.

An elementary observation will allow us to simplify the denominator of (5.20).  Assume that $d\,|\,n$ and $i\in\{0,1,\dots,n\}$.  Then
\[ \mu_{S\setminus\{i\}} + 1 = \bigg\lceil\frac{n}{d}\bigg\rceil = \mu. \]
This implies that $U^{S\setminus\{i\}}_{\min}$ is a singleton: $U^{S\setminus\{i\}}_{\min} = \{u^{(i)}\}$, where
\[ u^{(i)} = (1,\dots,1,0,1,\dots,1,\mu)\in{\mathbb N}^{n+2} \]
with zero in the $i$-th entry.  It follows that if we define a polynomial $g_i(\Lambda)\in({\mathbb Q}\cap{\mathbb Z}_p)[\Lambda]$ by the formula
\begin{equation}
g_i(\Lambda) = (-1)^\mu \sum_{\substack{\nu\in{\mathbb N}^N\\ \sum_{j=1}^N \nu_j{\bf a}_j^+ = (p-1)u^{(i)}}} \frac{\Lambda_1^{\nu_1}\cdots\Lambda_N^{\nu_N}}{\nu_1!\cdots\nu_N!}, 
\end{equation}
then by (2.3), $A^{S\setminus\{i\}}(\Lambda)$ is a one-by-one matrix with entry $g_i(\Lambda)$.  From (5.20) we then have the following result.
\begin{theorem}
If $d\,|\,n$, then
\begin{equation}
P_{\lambda}(q^{-\mu}t)\equiv 
\frac{\det\big(I-t\bar{A}^S({\lambda}^{p^{a-1}})\bar{A}^S({\lambda}^{p^{a-2}})\cdots \bar{A}^S({\lambda})\big)}{\prod_{i=0}^n \big(1-t\bar{g}_i({\lambda}^{p^{a-1}})\bar{g}_i({\lambda}^{p^{a-2}})\cdots \bar{g}_i({\lambda})\big)}\pmod{p}.
\end{equation}
\end{theorem}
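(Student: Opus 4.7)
The plan is to obtain Theorem 6.2 as a direct specialization of the general congruence (5.20) to the case $d\,|\,n$, using the structural observations about $U^{S\setminus\{i\}}_{\min}$ already sketched in the paragraph preceding the theorem. The substantive content of the theorem is a singleton calculation that makes each denominator factor in (5.20) explicit.

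First, I would record that for $d\,|\,n$ and any $i\in\{0,\dots,n\}$, one has $|S\setminus\{i\}|=n=\mu d$, hence $\mu_{S\setminus\{i\}}=\mu-1$. Next I would verify that $U^{S\setminus\{i\}}_{\min}$ consists of a single vector. Indeed, if $u\in U^{S\setminus\{i\}}_{\min}$ then $u_{n+1}=\mu$, and the homogeneity relation $\sum_{j=0}^n u_j = du_{n+1}=n$ combined with $u_j\geq 1$ for $j\in S\setminus\{i\}$ forces $u_i=0$ and $u_j=1$ for $j\neq i$; this is precisely the vector $u^{(i)}$ named in the text above.

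Because $|U^{S\setminus\{i\}}_{\min}|=1$, the matrix $A^{S\setminus\{i\}}(\Lambda)$ collapses to a single scalar polynomial. Comparing the defining formula (2.3), in which $(-1)^{\mu_{S\setminus\{i\}}+1}=(-1)^\mu$, with the definition (6.1) of $g_i(\Lambda)$, one sees that this scalar is exactly $g_i(\Lambda)$. Reducing modulo $p$, the denominator factor $\det\big(I-t\bar{A}^{S\setminus\{i\}}(\lambda^{p^{a-1}})\cdots\bar{A}^{S\setminus\{i\}}(\lambda)\big)$ in (5.20) reduces to the linear polynomial $1-t\bar{g}_i(\lambda^{p^{a-1}})\cdots\bar{g}_i(\lambda)$.

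Substituting this identification into (5.20) yields (6.3). I do not anticipate any real obstacle: apart from the trivial collapse of a one-by-one determinant, the theorem is bookkeeping on top of the already-established formula (5.20), and the only genuinely combinatorial input is the singleton computation of $U^{S\setminus\{i\}}_{\min}$.
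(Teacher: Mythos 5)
Your proposal is correct and follows exactly the paper's own derivation: compute $\mu_{S\setminus\{i\}}+1=\lceil n/d\rceil=\mu$, observe that $U^{S\setminus\{i\}}_{\min}=\{u^{(i)}\}$ is a singleton so that $A^{S\setminus\{i\}}(\Lambda)$ is the one-by-one matrix $[g_i(\Lambda)]$ by comparing (2.3) with (6.1), and substitute into (5.20). No gaps.
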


\section{Generic invertibility of $\bar{A}^I(\Lambda)$}

The proof of generic invertibility follows the lines of our recent proof of generic invertibility for the Hasse-Witt matrix\cite{AS1}.  We fix $I$ and prove that the matrix $\bar{A}^I(\Lambda)$ is generically invertible, in a sense made precise below.  

We consider the following condition on the set $\{{\bf a}_j\}_{j=1}^N$.  Suppose that $N\geq \mu_I + |U^I_{\min}|$ and that the elements $\{ {\bf a}_j \}_{j=1}^{\mu_I}$ and $\{ {\bf a}_j\}_{j=\mu_I+1}^{\mu_I+|U^I_{\min}|}$ have the following property:
\begin{hypothesis}
For each $u\in U^I_{\min}$, there exists (a necessarily unique) $k_u$, $1\leq k_u\leq |U^I_{\min}|$, such that $x^u=x^{{\bf a}_{\mu_I+k_u}^+}  \prod_{j=1}^{\mu_I} x^{{\bf a}_j^+}$.
\end{hypothesis}

We show that such subsets always exists when, for example, $\{x^{{\bf a}_j}\}_{j=1}^N$ consists of all monomials of degree $d$.  To fix ideas and simplify notation, suppose that $|I|=h$, $0\leq h\leq n+1$ and that $I=\{0,1,\dots,h-1\}$.  
For $j=1,\dots,\mu_I$ we take $x^{{\bf a}_j} = \prod_{k=(j-1)d}^{jd-1} x_k$
and we take $\{ x^{{\bf a}_j}\}_{j=\mu_I+1}^{\mu_I+|U^I_{\min}|}$ to consist of all monomials of degree~$d$ that are divisible by the product $x_{\mu_Id}\cdots x_{h-1}$.  Then Hypothesis~7.1 is satisfied.  

\begin{theorem}
If $\{{\bf a}_j\}_{j=1}^N$ satisfies Hypothesis $7.1$, then the matrix $\bar{A}^I(\Lambda)$ is invertible.
\end{theorem}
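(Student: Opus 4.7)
The plan is to follow the Hasse--Witt argument of \cite{AS1}: specialize $\Lambda_j = 0$ for all $j > \mu_I + |U^I_{\min}|$, and show that the resulting matrix is diagonal with nonzero diagonal entries modulo $p$. Since invertibility of $\bar A^I(\Lambda)$ amounts to $\det \bar A^I(\Lambda) \not\equiv 0$ in $\mathbb{F}_p[\Lambda]$, this suffices.

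Under this specialization, each entry $\bar A^I_{uv}$ becomes a sum over $\nu \in \mathbb{N}^{\mu_I + |U^I_{\min}|}$ with $\sum_j \nu_j {\bf a}_j^+ = pu-v$; Lemma~2.1 ensures $\nu_j \le p-1$. For the diagonal, Hypothesis~7.1 gives $u = {\bf a}_{\mu_I + k_u}^+ + \sum_{l=1}^{\mu_I}{\bf a}_l^+$, so the assignment $\nu_l = p-1$ for $l \le \mu_I$ and $\nu_{\mu_I + k_u} = p-1$ satisfies $\sum_j \nu_j {\bf a}_j^+ = (p-1)u = pu - u$. This contributes the monomial $\prod_{l=1}^{\mu_I}\Lambda_l^{p-1}\cdot\Lambda_{\mu_I + k_u}^{p-1}$ to $\bar A^I_{uu}$ with coefficient $(-1)^{\mu_I+1}/((p-1)!)^{\mu_I+1} \equiv 1 \pmod p$ by Wilson's theorem.

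The heart of the proof is to show that for $u \ne v$ in $U^I_{\min}$, the equation $\sum_j \nu_j {\bf a}_j^+ = pu-v$ admits no non-negative integer solution supported on $\{1,\ldots,\mu_I + |U^I_{\min}|\}$. Substituting ${\bf a}_{\mu_I + k}^+ = u^{(k)} - \sum_l {\bf a}_l^+$ (where $u^{(k)}$ denotes the element of $U^I_{\min}$ with $k_{u^{(k)}} = k$) and setting $m_k = \nu_{\mu_I + k}$, $M = \sum_k m_k$, the equation becomes
\[ \sum_{l=1}^{\mu_I}(\nu_l - M){\bf a}_l^+ + \sum_k m_k u^{(k)} = pu - v. \]
One then combines the non-negativity of $\nu$, the bound $\nu_j \le p-1$, and the coordinatewise constraints $u^{(k)}_i \ge 1$ for $i \in I$ and $\sum_i u^{(k)}_i = d(\mu_I+1)$ to derive a contradiction. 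In the quotient by $\mathrm{span}\{{\bf a}_l^+\}_{l \le \mu_I}$, the only ``formal'' solution has $m_{k_u} = p$ and $m_{k_v} = -1$, and the bounds prevent any perturbation by kernel vectors from producing a non-negative alternative.

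\textbf{Main obstacle.} The decisive step is the vanishing of the off-diagonal entries, which rests on a delicate coordinatewise argument tailored to the structure imposed by Hypothesis~7.1. In the concrete setting described in Section~7 (all degree-$d$ monomials with the ${\bf a}_l$ for $l \le \mu_I$ chosen as disjoint ``block'' products), one identifies a coordinate $i \in I$ for which $pu_i - v_i$ exceeds the maximum of $\sum_j \nu_j a_{ij}$ subject to $0 \le \nu_j \le p-1$, yielding an immediate contradiction. Extending this analysis to arbitrary Hypothesis-7.1 setups---where non-trivial linear dependencies among the ${\bf a}_j^+$ may occur---is the main technical challenge, and is expected to follow the combinatorial strategy developed in \cite{AS1}. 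Once off-diagonal vanishing is established, the specialized matrix is diagonal, its determinant equals the nonzero monomial $\prod_{l=1}^{\mu_I}\Lambda_l^{(p-1)|U^I_{\min}|}\prod_{k=1}^{|U^I_{\min}|}\Lambda_{\mu_I+k}^{p-1}\in\mathbb{F}_p[\Lambda]$, and Theorem~7.2 follows.
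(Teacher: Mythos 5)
Your proposal has a genuine gap at exactly the step you flag as the ``main obstacle,'' and the difficulty is worse than you suggest: the off-diagonal entries of the specialized matrix do \emph{not} vanish in general, so the strategy of producing a diagonal matrix by setting $\Lambda_j=0$ for $j>\mu_I+|U^I_{\min}|$ cannot be completed. Concretely, take $n=4$, $d=2$, $I=S$, so $\mu_I=2$ and $U^I_{\min}$ consists of the five vectors $(2,1,1,1,1,3),\dots,(1,1,1,1,2,3)$. The construction in Section~7 takes ${\bf a}_1=x_0x_1$, ${\bf a}_2=x_2x_3$ and ${\bf a}_3,\dots,{\bf a}_7$ the degree-$2$ monomials divisible by $x_4$, and Hypothesis~7.1 holds. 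For $u=(1,1,1,1,2,3)$ and $v=(2,1,1,1,1,3)$ one checks that
\[
(p-2)\,{\bf a}_1^+ + (p-1)\,{\bf a}_2^+ + 1\cdot(x_1x_4)^+ + (p-1)\,(x_4^2)^+ \;=\; pu-v,
\]
using the relation $(x_0x_4)^+ + (x_1x_4)^+ = (x_0x_1)^+ + (x_4^2)^+$. This $\nu$ is supported on $\{1,\dots,7\}$ with all $\nu_j\le p-1$ and unit coefficient $1/\big((p-2)!(p-1)!\,1!\,(p-1)!\big)$, so the specialized $(u,v)$-entry is a nonzero element of ${\mathbb F}_p[\Lambda_1,\dots,\Lambda_7]$. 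Your diagonal computation is fine (it is Lemma~7.6 of the paper in disguise), but the claimed off-diagonal vanishing is false, and no ``coordinatewise'' refinement will rescue it.

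The paper circumvents this by never asking individual off-diagonal entries to vanish. It rescales row $u$ by $\big(\Lambda_{\mu_I+k_u}\prod_{j\le\mu_I}\Lambda_j\big)^{-p}$ and column $v$ by $\Lambda_{\mu_I+k_v}\prod_{j\le\mu_I}\Lambda_j$ to form $D^I(\Lambda)$, whose determinant differs from $\det A^I(\Lambda)$ by a monomial. Every monomial $\Lambda^l$ occurring in $D^I_{uv}$ then has exponent $l$ in the signed relation cone $L_u$ (Lemma~7.5), the matrix of constant terms is diagonal with $p$-adic unit entries (Lemma~7.6), and the key combinatorial input is Proposition~7.7: no nontrivial collection $l^{(u)}\in L_u$ can satisfy $\sum_u l^{(u)}={\bf 0}$, proved by choosing a vertex ${\bf a}_{k_0}^+$ of the convex hull of the vectors actually used and observing that the $k_0$-coordinates are all $\ge 0$ with at least one $>0$. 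This shows the \emph{constant term of the determinant} receives contributions only from the identity permutation, which is strictly weaker than, and survives the failure of, off-diagonal vanishing. If you want to salvage your write-up, replace the specialization step by this row/column rescaling and the global relation argument of Proposition~7.7.
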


We begin with a reduction step.  Define a related matrix $D^I(\Lambda) = [D^I_{uv}(\Lambda)]_{u,v\in U^I_{\min}}$ by setting
\begin{multline*}
 D^I_{uv}(\Lambda) = \bigg(\prod_{j=1}^{\mu_I} \Lambda_j\bigg)^{-(p-1)} \Lambda_{\mu_I+k_u}^{-p} \Lambda_{\mu_I+k_v} A^I_{uv}(\Lambda) \in \\ 
\Lambda_{\mu_I+k_u}^{-1}\Lambda_{\mu_I+k_v}({\mathbb Q}\cap{\mathbb Z}_p)[\Lambda_1^{-1},\dots,\Lambda_{\mu_I}^{-1},\Lambda_{\mu_I+k_u}^{-1},\Lambda_{\mu_I+2},\dots,\hat{\Lambda}_{\mu_I+k_u},\dots,\Lambda_N] ,
\end{multline*}
where $A^I_{uv}(\Lambda)$ is given by (2.3).  In other words, we obtain $D^I(\Lambda)$ by multiplying row $u$ of $A^I(\Lambda)$ by $\big(\Lambda_{\mu_I+k_u}\prod_{j=1}^{\mu_I}\Lambda_j\big)^{-p}$ and multiplying column $v$ of $A^I(\Lambda)$ by $\Lambda_{\mu_I+k_v}\prod_{j=1}^{\mu_I}\Lambda_j$.  It follows that
\begin{equation}
 \det D^I(\Lambda) = \bigg(\prod_{j=1}^{\mu_I} \Lambda_j\bigg)^{-(p-1)|U^I_{\min}|}\bigg(\prod_{j=1}^{|U^I_{\min}|} \Lambda_{\mu_I + j}\bigg)^{-(p-1)}\det A^I(\Lambda),
\end{equation}
hence to prove Theorem 7.2 it suffices to prove that 
\begin{equation}
\det \bar{D}^I(\Lambda)\neq 0.
\end{equation}

For $u\in U^I_{\min}$, put
\begin{multline*}
L_u = \bigg\{ l=(l_1,\dots,l_N)\in{\mathbb Z}^N \mid \text{$\sum_{k=1}^N l_k{\bf a}_k^+ = {\bf 0}$,}\\ \text{$l_j\leq 0$ for $j=1,\dots,\mu_I$ and $j=\mu_I+k_u$, $l_j\geq 0$ otherwise}\bigg\}. 
\end{multline*}

\begin{lemma}
If the monomial  $\Lambda^l$ appears in the Laurent polynomial $D^I_{uv}(\Lambda)$, then $l\in L_u$.
\end{lemma}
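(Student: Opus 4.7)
The plan is to unwind the definition of $D^I_{uv}(\Lambda)$ to read off the exponent vector of a typical monomial, and then verify the three defining conditions of $L_u$ one by one: the linear relation $\sum_k l_k{\bf a}_k^+={\bf 0}$, the sign conditions on the indices $j\in\{1,\dots,\mu_I,\mu_I+k_u\}$, and the sign conditions on the remaining indices.

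First I would start from the formula
\[
D^I_{uv}(\Lambda)=\Big(\prod_{j=1}^{\mu_I}\Lambda_j\Big)^{-(p-1)}\Lambda_{\mu_I+k_u}^{-p}\Lambda_{\mu_I+k_v}\,A^I_{uv}(\Lambda),
\]
and observe that a monomial $\Lambda^l$ in $D^I_{uv}(\Lambda)$ corresponds to some $\nu\in{\mathbb N}^N$ with $\sum_{j=1}^N\nu_j{\bf a}_j^+=pu-v$ via
\[
l_j=\begin{cases} \nu_j-(p-1) & \text{if }1\leq j\leq \mu_I,\\ \nu_j-p+\delta_{k_u,k_v} & \text{if }j=\mu_I+k_u,\\ \nu_j+1 & \text{if }j=\mu_I+k_v\text{ and }k_v\neq k_u,\\ \nu_j & \text{otherwise.}\end{cases}
\]

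Next I would verify $\sum_k l_k{\bf a}_k^+={\bf 0}$. By Hypothesis~7.1 applied to both $u$ and $v$, we have $u={\bf a}_{\mu_I+k_u}^++\sum_{j=1}^{\mu_I}{\bf a}_j^+$ and $v={\bf a}_{\mu_I+k_v}^++\sum_{j=1}^{\mu_I}{\bf a}_j^+$, so
\[
pu-v=p\,{\bf a}_{\mu_I+k_u}^++(p-1)\sum_{j=1}^{\mu_I}{\bf a}_j^+-{\bf a}_{\mu_I+k_v}^+.
\]
Plugging this into $\sum_k l_k{\bf a}_k^+=\sum_k\nu_k{\bf a}_k^+-(p-1)\sum_{j=1}^{\mu_I}{\bf a}_j^+-p\,{\bf a}_{\mu_I+k_u}^++{\bf a}_{\mu_I+k_v}^+$ gives $0$, and the formula is unchanged under the coincidence $k_u=k_v$.

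Finally I would check the sign conditions. For $j\not\in\{1,\dots,\mu_I,\mu_I+k_u\}$, either $l_j=\nu_j\geq 0$ or (in case $j=\mu_I+k_v\neq \mu_I+k_u$) $l_j=\nu_j+1\geq 0$, as required. For $j\in\{1,\dots,\mu_I\}$ and $j=\mu_I+k_u$, the conclusion $l_j\leq 0$ reduces to the bounds $\nu_j\leq p-1$ (resp.\ $\nu_{\mu_I+k_u}\leq p-\delta_{k_u,k_v}$), and both follow immediately from Lemma~2.1. Thus $l\in L_u$.

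The argument is essentially bookkeeping; the only subtle point is making sure the two cases $k_u=k_v$ and $k_u\neq k_v$ are handled uniformly, and that the key inequality $l_{\mu_I+k_u}\leq 0$ (which is strict when $k_u\neq k_v$) is not spoiled by the shift $+\delta_{k_u,k_v}$. Since Lemma~2.1 gives $\nu_j\leq p-1$ even when $j=\mu_I+k_u=\mu_I+k_v$, no obstacle arises.
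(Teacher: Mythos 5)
Your proof is correct and follows the same route as the paper's: use Hypothesis 7.1 to write $u$ and $v$ as ${\bf a}_{\mu_I+k_u}^++\sum_{j=1}^{\mu_I}{\bf a}_j^+$ and ${\bf a}_{\mu_I+k_v}^++\sum_{j=1}^{\mu_I}{\bf a}_j^+$, then combine the bound $0\leq\nu_j\leq p-1$ from Lemma~2.1 with the exponent shifts in the definition of $D^I_{uv}(\Lambda)$. The paper compresses the bookkeeping into one sentence; your version simply makes the exponent computation and the two cases $k_u=k_v$, $k_u\neq k_v$ explicit.
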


\begin{proof}
It follows from Hypothesis 7.1 that
\[ u={\bf a}_{\mu_I+k_u}^+ + \sum_{j=1}^{\mu_I}{\bf a}_j^+\quad\text{and}\quad v={\bf a}_{\mu_I+k_v}^+ + \sum_{j=1}^{\mu_I}{\bf a}_j^+. \]
The formula for $A^I_{uv}(\Lambda)$ is given in (2.3), where each exponent $\nu_j$ satisfies $0\leq\nu_j\leq p-1$ by Lemma~2.1.  The assertion of Lemma 7.5 now follows from the definition of $D^I_{uv}(\Lambda)$.
\end{proof}

\begin{lemma}
The Laurent polynomial $D^I_{uv}(\Lambda)$ has no constant term if $u\neq v$ and has constant term equal to $\big(-(p-1)!\big)^{-(\mu_I+1)}$ if $u=v$.
\end{lemma}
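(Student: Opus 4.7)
The plan is to compute directly from the definitions. Starting from
\[ D^I_{uv}(\Lambda) = \bigg(\prod_{j=1}^{\mu_I}\Lambda_j\bigg)^{-(p-1)}\Lambda_{\mu_I+k_u}^{-p}\Lambda_{\mu_I+k_v}A^I_{uv}(\Lambda) \]
together with the series expression (2.3) for $A^I_{uv}(\Lambda)$, I would track how each monomial $\Lambda^\nu$ appearing in $A^I_{uv}(\Lambda)$ (with $\nu\in{\mathbb N}^N$) is transformed into a Laurent monomial in $D^I_{uv}(\Lambda)$. Specifically, its exponent vector $\nu$ is shifted by $-(p-1)$ in positions $1,\dots,\mu_I$, by $-p$ in position $\mu_I+k_u$, and by $+1$ in position $\mu_I+k_v$ (other entries unchanged), where these shifts combine additively at the single index $\mu_I+k_u=\mu_I+k_v$ if $u=v$.

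Next I would ask which $\nu\in{\mathbb N}^N$ produces the zero exponent vector after these shifts, i.e.\ contributes to the constant term. Reading off the required values coordinate by coordinate forces $\nu_j=p-1$ for $j=1,\dots,\mu_I$ and $\nu_j=0$ for $j\notin\{1,\dots,\mu_I,\mu_I+k_u,\mu_I+k_v\}$. If $u\neq v$, then by the uniqueness assertion in Hypothesis~7.1 we have $k_u\neq k_v$, which would require $\nu_{\mu_I+k_v}=-1$; this is impossible in ${\mathbb N}$, so no monomial in $A^I_{uv}(\Lambda)$ contributes to a constant term, and the first assertion follows.

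If $u=v$, so that $k_u=k_v$, the shifts at position $\mu_I+k_u$ combine to $-(p-1)$, forcing $\nu_{\mu_I+k_u}=p-1$. Thus a unique candidate $\nu$ remains, namely $\nu_j=p-1$ for $j\in\{1,\dots,\mu_I,\mu_I+k_u\}$ and $\nu_j=0$ otherwise. I would verify that this $\nu$ actually occurs in the sum defining $A^I_{uu}(\Lambda)$ by checking $\sum_{j=1}^N\nu_j{\bf a}_j^+=(p-1)u$: using Hypothesis~7.1 we have $u={\bf a}_{\mu_I+k_u}^++\sum_{j=1}^{\mu_I}{\bf a}_j^+$, and multiplying by $p-1$ gives exactly the required identity. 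Finally, reading off the coefficient from (2.3) yields
\[ \frac{(-1)^{\mu_I+1}}{\big((p-1)!\big)^{\mu_I+1}}=\big(-(p-1)!\big)^{-(\mu_I+1)}, \]
as claimed.

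The argument is essentially bookkeeping of exponents; the only point that needs care is the case analysis $k_u\neq k_v$ versus $k_u=k_v$, which rests on the uniqueness part of Hypothesis~7.1 and on the nonnegativity of the exponents $\nu_j$ guaranteed by Lemma~2.1. No serious obstacle is expected.
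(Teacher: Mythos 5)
Your proposal is correct and follows essentially the same route as the paper: a direct bookkeeping of the exponent shifts applied to the monomials $\Lambda^\nu$ of $A^I_{uv}(\Lambda)$, using the nonnegativity (and, via Lemma~2.1, the bound $\nu_j\leq p-1$) of the exponents to rule out a constant term when $k_u\neq k_v$, and Hypothesis~7.1 to exhibit the unique $\nu$ giving the constant term $\big(-(p-1)!\big)^{-(\mu_I+1)}$ when $u=v$. No gaps; the verification that the candidate $\nu$ actually satisfies $\sum_j\nu_j{\bf a}_j^+=(p-1)u$ is exactly the computation the paper performs.
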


\begin{proof}
If $u\neq v$, the definition of $D^I_{uv}(\Lambda)$ shows that every monomial in $D^I_{uv}(\Lambda)$ contains a negative power of $\Lambda_{\mu_I + k_u}$ and a positive power of $\Lambda_{\mu_I+k_v}$.  If $u=v$, then
\[ pu-v = (p-1)u = (p-1)\bigg({\bf a}^+_{\mu_I+k_u} + \sum_{j=1}^{\mu_I}{\bf a}_j^+\bigg), \]
so (2.3) shows that the monomial
\[ (-1)^{\mu_I+1} \frac{\big(\Lambda_{\mu_I+k_u}\prod_{j=1}^{\mu_I}\Lambda_j\big)^{p-1}}{(p-1)!^{\mu_I+1}} \]
appears in $A^I_{uv}(\Lambda)$.  The assertion of the lemma then follows from the definition of $D^I_{uv}(\Lambda)$.
\end{proof}

We shall prove (7.4) by showing that $\det \bar{D}^I(\Lambda)$ has a nonzero constant term.  In fact, we shall show that the constant term of $\det D^I(\Lambda)$ is a $p$-adic unit.  By Lemma~7.5 and the following proposition, the constant term of $\det D^I(\Lambda)$ is the determinant of the matrix whose $(u,v)$-entry is the constant term of $D^I_{uv}(\Lambda)$.  And by Lemma~7.6, this matrix of constant terms is a diagonal matrix whose diagonal entries are $p$-adic units.  Thus the following proposition completes the proof of~(7.4).

\begin{proposition}
Let $l^{(u)} = (l^{(u)}_1,\dots,l^{(u)}_N)\in L_u$ for $u\in U^I_{\min}$.  One has 
\begin{equation}
\sum_{u\in U^I_{\min}} l^{(u)} = {\bf 0} 
\end{equation}
if and only if $l^{(u)} = {\bf 0}$ for all $u\in U^I_{\min}$.
\end{proposition}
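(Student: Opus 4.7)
The plan is to translate the sign conditions on $\{l^{(u)}\}$ into a problem about a square matrix indexed by $U^I_{\min}$ and then dispatch that matrix statement by an extreme-point induction. First I would peel off the ``easy'' coordinates: the coordinate-wise identity $\sum_u l^{(u)}_j = 0$ combined with the sign conditions forces $l^{(u)}_j = 0$ for every $u$ whenever $j \in \{1,\dots,\mu_I\}$ (all $l^{(u)}_j \leq 0$) or $j > \mu_I + |U^I_{\min}|$ (all $l^{(u)}_j \geq 0$). Since every ${\bf a}_j^+$ has last coordinate $1$, the relation $\sum_j l^{(u)}_j {\bf a}_j^+ = {\bf 0}$ also yields $\sum_j l^{(u)}_j = 0$ for each $u$. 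Hypothesis 7.1 makes $u \mapsto k_u$ an injection of $U^I_{\min}$ into $\{1,\dots,|U^I_{\min}|\}$, hence a bijection by cardinality. Define $M = [M_{uv}]_{u,v\in U^I_{\min}}$ by $M_{uv} = l^{(u)}_{\mu_I+k_v}$. Then $M_{uu}\leq 0$, $M_{uv}\geq 0$ for $u\neq v$, column sums vanish by $\sum_u l^{(u)} = {\bf 0}$, and row sums vanish from the previous sentence. Setting $c = \sum_{j=1}^{\mu_I} {\bf a}_j^+$, Hypothesis 7.1 gives ${\bf a}^+_{\mu_I+k_v} = v - c$, so the lattice relation $\sum_j l^{(u)}_j {\bf a}_j^+ = {\bf 0}$ collapses (after the easy-coordinate reductions) to $\sum_v M_{uv}(v-c) = {\bf 0}$; combined with the row-sum identity this reads $\sum_v M_{uv}\,v = {\bf 0}$ for every $u$. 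The proposition is thus equivalent to showing that any such $M$ is zero.

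Next I would prove $M = 0$ by induction on $|U^I_{\min}|$, the base case being trivial. For the inductive step, choose $u^*$ to be an extreme point of the convex hull of $U^I_{\min}$. Writing $d_u = -M_{uu} \geq 0$, the relations $\sum_v M_{uv} = 0$ and $\sum_v M_{uv}\,v = {\bf 0}$ specialize at $u = u^*$ to $\sum_{v\neq u^*} M_{u^*,v} = d_{u^*}$ and $\sum_{v\neq u^*} M_{u^*,v}\,v = d_{u^*}\,u^*$. If $d_{u^*}>0$, dividing the second identity by $d_{u^*}$ exhibits $u^*$ as a convex combination of $\{v\in U^I_{\min} : v\neq u^*\}$, contradicting extremality. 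Hence $d_{u^*}=0$, and since the $M_{u^*,v}$ with $v\neq u^*$ are non-negative and sum to $0$, row $u^*$ of $M$ vanishes. The column-sum identity at $v=u^*$ then forces column $u^*$ to vanish as well. Striking row and column $u^*$ produces a smaller matrix satisfying the same four hypotheses on $U^I_{\min}\setminus\{u^*\}$, and the induction closes.

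The only nontrivial step is the reduction to the clean matrix problem; in particular, the seemingly innocuous fact that the last coordinate of each ${\bf a}_j^+$ equals $1$ is what converts the purely combinatorial lattice relation into the geometric identity $\sum_v M_{uv}\,v = {\bf 0}$, and it is this identity, together with Hypothesis 7.1 expressing $u$ as ${\bf a}^+_{\mu_I+k_u}+c$, that powers the extreme-point argument. Once the matrix reformulation is in place, the induction is routine.
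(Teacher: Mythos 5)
Your argument is correct. Both your proof and the paper's ultimately rest on the same geometric fact --- a vertex of the convex hull of a finite point set cannot be a nontrivial convex combination of the other points --- but the two arguments are organized quite differently. The paper keeps the relation $\sum_{k} l^{(u)}_{\mu_I+k}{\bf a}^+_{\mu_I+k}={\bf 0}$ in terms of the exponent vectors, solves it for ${\bf a}^+_{\mu_I+k_u}$ whenever $l^{(u)}\neq{\bf 0}$ (Lemma 7.16), restricts to the support set $Z$ of exponent vectors actually occurring with a nonzero coefficient, and gets a one-shot contradiction: at a vertex ${\bf a}^+_{k_0}$ of $\operatorname{conv}(Z)$ no $k_u$ can equal $k_0$, so the column sum $\sum_u l^{(u)}_{\mu_I+k_0}$ is strictly positive, violating $\sum_u l^{(u)}={\bf 0}$. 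You instead package everything into the square matrix $M_{uv}=l^{(u)}_{\mu_I+k_v}$ with vanishing row and column sums, a Metzler-type sign pattern, and the barycentric relations $\sum_v M_{uv}\,v={\bf 0}$ (your translation by $c=\sum_{j\le\mu_I}{\bf a}_j^+$ correctly converts $\operatorname{conv}(\{{\bf a}^+_{\mu_I+k}\})$ into $\operatorname{conv}(U^I_{\min})$), and then peel off extreme points by induction, using the row relation rather than the column sum to kill the chosen row. Your reformulation isolates a clean, self-contained linear-algebra statement and avoids introducing the support set $Z$, at the cost of an induction the paper does not need; the paper's version is shorter but leaves the combinatorial content slightly more entangled with the notation $L_u$. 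Both are complete; one minor point worth stating explicitly in your write-up is the injectivity of $u\mapsto k_u$ (immediate from Hypothesis 7.1, as you note), since the whole matrix indexing depends on it.
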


\begin{proof}
The ``if'' part of the proposition is clear, so consider a set $\{l^{(u)}\}_{u\in U^I_{\min}}$ with $l^{(u)}\in L_u$ satisfying (7.8).  Since $l^{(u)}\in L_u$ we have
\begin{equation}
\sum_{k=1}^N l^{(u)}_k {\bf a}_k^+ = {\bf 0},
\end{equation}
which  implies (since the last coordinate of each ${\bf a}_k^+$ equals 1)
\begin{equation}
\sum_{k=1}^N l^{(u)}_k = 0.
\end{equation}
From the definition of $L_u$, we have $l^{(u)}_j\leq 0$ for all $u$ if $j\leq \mu_I$ and $l^{(u)}_j\geq 0$ for all $u$ if $j\geq \mu_I+|U^I_{\min}|+1$, so (7.8) implies that
\begin{equation}
\text{$l^{(u)}_j = 0$ for all $u$ and all $j\leq\mu_I$}
\end{equation}
and
\begin{equation}
\text{$l^{(u)}_j = 0$ for all $u$ and all $j\geq\mu_I+|U^I_{\min}|+1$.}
\end{equation}
To establish the proposition, it remains to show that $l^{(u)}_j=0$ for all $u$ if $\mu_I+1\leq j\leq \mu_I+|U^I_{\min}|$.  

Using (7.11) and (7.12), Equations (7.9) and (7.10) become
\begin{equation}
\sum_{k=1}^{|U^I_{\min}|} l^{(u)}_{\mu_I+k} {\bf a}_k^+ = {\bf 0}
\end{equation}
and
\begin{equation}
\sum_{k=1}^{|U^I_{\min}|} l^{(u)}_{\mu_I+k} = 0
\end{equation}
for all $u$.  Since $l^{(u)}\in L_u$, we have $l^{(u)}_{\mu_I+k_u}\leq 0$ and $l^{(u)}_{\mu_I+k}\geq 0$ if $k\neq k_u$.  If $l^{(u)}_{\mu_I+k_u} = 0$, then (7.14) implies that $l^{(u)}_{\mu_I+k} = 0$ for $k\neq k_u$ as well, so $l^{(u)} = {\bf 0}$.

If follows that if $l^{(u)}\neq{\bf 0}$, then $l^{(u)}_{\mu_I+k_u}<0$ so we can solve (7.13) for~${\bf a}_{k_u}^+$:
\begin{equation}
{\bf a}_{k_u}^+ = \sum_{\substack{k=1\\ k\neq k_u}}^{|U^I_{\min}|}\bigg( -\frac{l^{(u)}_{\mu_I+k}}{l^{(u)}_{\mu_I+k_u}}\bigg) {\bf a}_k^+.
\end{equation}
The coefficients on the right-hand side of (7.15) are nonnegative rational numbers that sum to 1, so (7.15) implies the following statement.
\begin{lemma}
If $l^{(u)}\neq{\bf 0}$, then ${\bf a}_{k_u}^+$ is an interior point of the convex hull of the set $\{ {\bf a}_k^+\mid l^{(u)}_{\mu_I+k}>0\}$.
\end{lemma}

Let $Z=\{ {\bf a}_k^+\mid \text{$l^{(u)}_{\mu_I+k}\neq 0$ for some $u\in U^I_{\min}$}\}$.  If ${\bf a}_k^+\not\in Z$, then $l^{(u)}_{\mu_I+k}=0$ for all $u\in U^I_{\min}$, so if $Z=\emptyset$ then $l^{(u)}_{\mu_I+k} = 0$ for all $k$, $1\leq k\leq |U^I_{\min}|$, and all $u\in U^I_{\min}$, which establishes the proposition.  So suppose $Z\neq\emptyset$ and choose $k_0$, $1\leq k_0\leq |U^I_{\min}|$, such that ${\bf a}_{k_0}^+$ is a vertex of the convex hull of $Z$.    If $l^{(u)}\neq{\bf 0}$, then Lemma 7.16 implies that $k_0\neq k_u$.  By the definition of $L_u$, we therefore have $l^{(u)}_{\mu_I+k_0}\geq 0$ for all $u\in U^I_{\min}$.  Furthermore, since ${\bf a}_{k_0}^+\in Z$, we must have $l^{(u)}_{\mu_I+k_0}>0$ for some $u\in U^I_{\min}$.  It follows that $\sum_{u\in U^I_{\min}} l^{(u)}_{\mu_I+k_0}>0$, contradicting (7.8).  Thus we must have $Z=\emptyset$.
\end{proof}

\end{document}